\newcommand{\tscale}{\xrightharpoonup[]{~2~}}%{\overset{2}{\rightharpoonup}}
\newcommand{\ZZ}{{\mathbb Z}}
\newcommand{\RR}{{\mathbb R}}
\newcommand{\DD}{{\mathbb{D}}}
\newcommand{\II}{{\mathbf I}}
\newcommand{\bfa}{\mathbf{a}}
\newcommand{\bfb}{\mathbf{b}}
\newcommand{\cel}{\mathrm{ref}}
\newcommand{\bg}{\mathbf{g}}
\newcommand{\nn}{\mathbf{n}}
\newcommand{\ee}{\mathrm{e}}
\newcommand{\kk}{\mathbf{k}}
\newcommand{\uu}{\mathbf{u}}
\newcommand{\xx}{x}
\newcommand{\yy}{y}
\newcommand{\us}{\mathrm{s}}
\newcommand{\bT}{\boldsymbol \tau}
\renewcommand\vec[1]{\boldsymbol #1} %no need bm package
\newcommand{\nRe}{{R}_{{e}}}
\newcommand{\nFr}{{F}_{{r}}}
\newcommand{\per}{\mathrm{per}}%{\sharp}%
\newcommand{\hmeas}{\mathcal{H}^{d-1}}
\newcommand{\bP}{\mathbf{U}} %%%%% change P to U, P for pressure
\newcommand{\abs}[1]{\left\lvert #1\right\rvert}
\newcommand{\norm}[1]{\left\lVert #1\right\rVert}
\newcommand{\calA}{\mathcal{A}}
\newcommand{\calB}{\mathcal{B}}
\newcommand{\calN}{\mathcal{N}}
\newcommand{\fraM}{\mathfrak{M}}
\DeclareMathOperator{\diam}{diam}
\DeclareMathOperator{\di}{d\!}
\DeclareMathOperator{\Div}{\mathrm{div}}
\DeclareMathOperator{\dist}{\mathrm{dist}}
\def\XXint#1#2#3{{\setbox0=\hbox{$#1{#2#3}{\int}$ }
\vcenter{\hbox{$#2#3$ }}\kern-.6\wd0}}
\def\dashint{\intbar}
\begin{document}

\title*{Global gradient estimate for a divergence problem and its
  application to the homogenization of a magnetic suspension}
\titlerunning{Global gradient estimate and homogenization of a magnetic suspension}% for an abbreviated version of
% your contribution title if the original one is too long
\author{Thuyen Dang, Yuliya Gorb, and Silvia Jim\'{e}nez  Bola\~{n}os}
%\authorrunning{}% for an abbreviated version of
% your contribution title if the original one is too long
\institute{Thuyen Dang \at  University of Houston, Houston, TX 77204 USA, \email{ttdang9@central.uh.edu}
\and  Yuliya Gorb \at National Science Foundation, Alexandria, VA 22314 USA,
\email{ygorb@nsf.gov}
\and  Silvia Jim\'{e}nez  Bola\~{n}os \at Colgate University, Hamilton, NY 13346 USA, \email{sjimenez@colgate.edu}
}
%
% Use the package "url.sty" to avoid
% problems with special characters
% used in your e-mail or web address
%
\maketitle

\abstract*{This paper generalizes the results obtained by the authors
  in \cite{dangHomogenizationNondiluteSuspension2021} concerning the
  homogenization of a non-dilute suspension of magnetic particles in a
  viscous
  flow. %, by improving the necessary regularity result for the gradient of the solution.
  More specifically, in this paper, a restrictive assumption on the
  coefficients of the coupled equation, made in
  \cite{dangHomogenizationNondiluteSuspension2021}, that significantly
  narrowed the applicability of the homogenization results obtained,
  is relaxed and a new regularity of the solution of the fine-scale
  problem is proven. In particular, we obtain a global
  $L^{\infty}$-bound for the gradient of the solution of the scalar
  equation
  $-\Div \left[ \bfa \left( x/\varepsilon \right)\nabla
    \varphi^{\varepsilon}(x) \right] = f(x)$, uniform with respect to
  microstructure scale parameter $\varepsilon\ll 1$ in a small
  interval $(0,\varepsilon_0)$, where the coefficient $\bfa$ is only
  \emph{piecewise} H\"{o}lder continuous. Thenceforth, this regularity
  is used in the derivation of the effective response of the given
  suspension discussed in
  \cite{dangHomogenizationNondiluteSuspension2021}.}

\abstract{This paper generalizes the results obtained by the authors
  in \cite{dangHomogenizationNondiluteSuspension2021} concerning the
  homogenization of a non-dilute suspension of magnetic particles in a
  viscous
  flow. %, by improving the necessary regularity result for the gradient of the solution.
  More specifically, in this paper, a restrictive assumption on the
  coefficients of the coupled equation, made in
  \cite{dangHomogenizationNondiluteSuspension2021}, that significantly
  narrowed the applicability of the homogenization results obtained,
  is relaxed and a new regularity of the solution of the fine-scale
  problem is proven. In particular, we obtain a global
  $L^{\infty}$-bound for the gradient of the solution of the scalar
  equation
  $-\Div \left[ \bfa \left( x/\varepsilon \right)\nabla
    \varphi^{\varepsilon}(x) \right] = f(x)$, uniform with respect to
  microstructure scale parameter $\varepsilon\ll 1$ in a small
  interval $(0,\varepsilon_0)$, where the coefficient $\bfa$ is only
  \emph{piecewise} H\"{o}lder continuous. Thenceforth, this regularity
  is used in the derivation of the effective response of the given
  suspension discussed in
  \cite{dangHomogenizationNondiluteSuspension2021}.}

\section{Introduction}
\label{sec:introduction}
The purpose of this paper is to generalize the results obtained by the
authors in \cite{dangHomogenizationNondiluteSuspension2021}, where the
rigorous analysis of the homogenization  of  a  particulate flow
consisting of a non-dilute suspension of a viscous Newtonian fluid
with magnetizable particles was developed.  Here, the fluid is assumed
to be described by the Stokes flow and the particles are either
paramagnetic or diamagnetic. The coefficients of the corresponding
partial differential equations are locally periodic and a one-way
coupling between the fluid domain and the particles is also
assumed. Such one-way coupling has been observed in nature, see
\cite[Chapter 1]{davidsonIntroductionMagnetohydrodynamics2001}. For
details and information about the applications and literature on the
magnetic suspension, we turn to \cite{dangHomogenizationNondiluteSuspension2021}
and references cited therein; however, the mathematical formulation of
the considered problem is given in Section \ref{ss:setup}
below. References on the effective viscosity of a
suspension without the coupling with magnetic field include 
\cite{niethammerLocalVersionEinstein2020,gorbHomogenizationRigidSuspensions2014,hoferSedimentationInertialessParticles2018,mecherbetSedimentationParticlesStokes2019,gerard-varetAnalysisViscosityDilute2020,duerinckxCorrectorEquationsFluid2021,hainesProofEinsteinEffective2012,duerinckxSedimentationRandomSuspensions2021,duerinckxEinsteinEffectiveViscosity2020,duerinckxQuantitativeHomogenizationTheory2021,berlyandFictitiousFluidApproach2009a,berlyandHomogenizedNonNewtonianViscoelastic2004,desvillettesMeanfieldLimitSolid2008}.

In \cite{dangHomogenizationNondiluteSuspension2021}, a restrictive
assumption about the magnetic permeability of the suspension, denoted
by $\bfa$, was made. Here, the function $\bfa(\cdot)$ is locally
periodic and elliptic, where the latter means that $\lambda \II \leq
\bfa (x)$ and $ \norm{\bfa}_{L^\infty}\leq \Lambda$, for all $x \in \Omega$, with the
suspension domain $\Omega\subset \mathbb{R}^d$, $d=2,3$, including
both the ambient fluid and the particles, and $\lambda, \Lambda> 0$
given in \ref{cond:a-bound}-\ref{cond:a-elliptic} below.
The assumption on the function $\bfa$ made in \cite{dangHomogenizationNondiluteSuspension2021} is as follows: for a given $s \in (4,6]$, there exists a small number $\delta = \delta (\Lambda, d, \Omega) > 0$, for which the magnetic permeability $\bfa$ satisfies the following condition: 
\begin{equation}
\label{supinf}
\mathrm{ess} \sup \bfa - \mathrm{ess} \inf \bfa \le \delta.
\end{equation}
As a consequence of \eqref{supinf}, in \cite{dangHomogenizationNondiluteSuspension2021} we obtained the following gradient estimate for the magnetic potential $\varphi^{\varepsilon}$:  
\begin{equation}
\label{eq:493}
\int_{\Omega} \abs{\nabla \varphi^{\varepsilon}}^{s} \di \xx \le C
  \int_{\Omega} \abs{\kk}^{s} \di \xx,
\end{equation}
where the constant $C>0$ is independent of $\varepsilon$, $\varphi^{\varepsilon}$ and
{$\kk$}; with $0<\varepsilon \ll 1$ the scale of the microstructure, $\kk \in H^1({\Omega},\RR^d)$ divergence-free, satisfying the compatibility condition $\displaystyle\int_{\partial\Omega} \kk \cdot \nn_{\partial \Omega} \di \us=0$, and appearing in the Neumann boundary condition on $\partial\Omega$, the boundary of the domain $\Omega$, given by: 
\begin{equation} 
\label{NC}\left( \bfa \nabla \varphi^{\varepsilon} \right)\cdot \nn_{\partial \Omega}
  = \kk \cdot \nn_{\partial \Omega},
  \end{equation}where $\nn_{\partial\Omega}$ is the outward-pointing unit normal vector to $\partial \Omega$. The regularity result \eqref{eq:493} was then used in the derivation of the effective (or homogenized) response of the given suspension that was rigorously justified in \cite{dangHomogenizationNondiluteSuspension2021}.

The main goal of this paper is to relax the assumption
  \eqref{supinf} on the magnetic permeability $\bfa$. To achieve this, we
  consider the Dirichlet boundary condition given in \eqref{DC} below,
  rather than one given in \eqref{NC}, and obtain the Lipschitz estimate
  \eqref{eq:34}, instead of \eqref{eq:493}, for the gradient of the
  magnetic potential $\varphi^{\varepsilon}$, see
  \cref{sec:gradient-estimate-1} below. In this paper, we are able to remove the condition \eqref{supinf} and have $\bfa$ only required to be \emph{piecewise H\"{o}lder continuous}.
  Such relaxation will be based on the following observations:
 
\begin{itemize}
\item The De Giorgi-Nash-Moser
  estimate \cite[Theorem
  8.24]{gilbargEllipticPartialDifferential2001} states that the solutions of scalar equations are H\"{o}lder continuous.

\item If
  $1 > \varepsilon \ge \varepsilon_0$, for some $\varepsilon_0 > 0$,
  the uniform gradient bound \eqref{eq:34} can be obtained by
  the result of Li and Vogelius
  \cite{liGradientEstimatesSolutions2000}. The case when
  $\varepsilon_0 > \varepsilon > 0$ is resolved by the compactness method, which is discussed
  below.

\item Provided $\bfa$ is also 
  symmetric (this assumption is only necessary for the corrector
  results in \cref{cor:main}), the gradients of the solutions of the cell
  problems are in  $L^{\infty}(Y)$.
\end{itemize}

The main tools used in the proof of this theorem are (i) the regularity results of Li and Vogelius \cite{liGradientEstimatesSolutions2000}, and (ii) the celebrated \emph{compactness method}, which was first used in homogenization in the seminal works of Avellaneda and Lin
\cite{avellanedaCompactnessMethodsTheory1987,avellanedaCompactnessMethodsTheory1989}. Its
machinery and applications in homogenization are carefully explained
in \cite{prangeUniformEstimatesHomogenization2014}. In the context of homogenization, this method utilizes compactness in order to gain an improved regularity from a limiting equation via a proof by contradiction. This improvement of regularity is iterated and then used in a blow-up argument. Usually, the implementation of this method follows three steps, coined by Avellaneda and Lin
\cite{avellanedaCompactnessMethodsTheory1987,avellanedaCompactnessMethodsTheory1989} titled as {\it (i)} ``improvement'',  {\it (ii)} ``iteration'', and {\it (iii)} ``blow-up''.

The main contribution of this improved regularity result is that it will allow us to significantly widen the range of applicability of the results obtained in
\cite{dangHomogenizationNondiluteSuspension2021}. 

The outline of the paper is as follows. In
Section~\ref{sec:formulation}, the main notations are introduced and
the formulation of the fine-scale problem is discussed.
\cref{sec:gradient-estimate-1}, which provides an improved gradient
estimate for the magnetic potential, is stated and discussed in
Section~\ref{sec:gradient-estimate}.  
In Section~\ref{sec:InteriorEst}, we obtain the interior Lipschitz and H\"{o}lder estimates, which provide the foundation for the boundary and corrector estimates discussed in  Section~\ref{sec:bound-hold-estim}. With all the results at hand, we then present the proof of our main theorem, also in
Section~\ref{sec:bound-hold-estim}.
In
Section~\ref{sec:appl-magn-susp}, the homogenization results are
obtained and summarized in \cref{cor:main}.  The conclusions
are given in Section~\ref{sec:conclusions}. The classical Schauder estimate is recalled in \cref{sec:an-appendix}. 

\section{Formulation}
\label{sec:formulation}
\subsection{Notation}
For a measurable set $A$ and a measurable function $f\colon A \to \RR$, we define by $\abs{A}$ the measure of $A$ and $\displaystyle\dashint_A f(x)\di x \coloneqq \frac{1}{|A|}\int_A f(x)\di x$. 
%\begin{align*}
%    \dashint_A f(x)\di x \coloneqq \frac{1}{|A|}\int_A f(x)\di x.
%\end{align*}

Throughout this paper, the scalar-valued functions, such as the
pressure $p$, are written in usual typefaces, while vector-valued or
tensor-valued functions, such as the velocity $\uu$ and the Cauchy
stress tensor $\vec{\sigma}$, are written in bold.
Sequences are indexed by
numeric superscripts ($\phi^i$), while elements of vectors or tensors
are indexed by numeric subscripts ($x_i$). Finally, the Einstein
summation convention is used whenever applicable.
\subsection{Set up of the problem}
\label{ss:setup}
Consider $\Omega \subset \RR^d$, for $d \ge 2$, a simply connected and
bounded domain of class $C^{1,\alpha},~ 0< \alpha <1$, and let
$Y\coloneqq (0,1)^d$ be the unit cell in $\RR^d$. The unit cell $Y$ is
decomposed into:
$$Y=Y_s\cup Y_f \cup \Gamma,$$
where $Y_s$, representing the magnetic
inclusion, and $Y_f$, representing the fluid domain, are open sets in
$\mathbb{R}^d$, and $\Gamma$ is the closed $C^{1,\alpha}$ interface that
separates them.
%We assume further that $\overline{Y_s} \subset \subset Y,$ so 
%\begin{align}
%\label{eq:63}
%\kappa \coloneqq \dist \left( Y_s, \partial Y \right)
%\end{align}
%is a positive constant.
Let $i = (i_1, \ldots, i_d) \in \ZZ^d$ be a vector of indices and $\{\ee^1, \ldots, \ee^d\}$ be the canonical basis of $\RR^d$. 
For a fixed small $\varepsilon > 0,$ we define the dilated sets: 
\begin{align*}
    Y^\varepsilon_i 
    \coloneqq \varepsilon (Y + i),~~
    Y^\varepsilon_{i,s}
    \coloneqq \varepsilon (Y_s + i),~~
    Y^\varepsilon_{i,f}
    \coloneqq \varepsilon (Y_f + i),~~
    \Gamma^\varepsilon_i 
    \coloneqq \partial Y^\varepsilon_{i,s}.
\end{align*}
Typically, in homogenization theory, the positive number $\varepsilon
\ll 1$ is referred to as the {\it size of the microstructure}. The
effective (or {\it homogenized}) response of the given suspension
corresponds to the case $\varepsilon=0$.
% Thus, in this paper, $0 <\varepsilon < 1$.

We denote by $\nn_i,~\nn_{\Gamma}$ and $\nn_{\partial \Omega}$ the unit normal vectors to $\Gamma^{\varepsilon}_{i}$ pointing outward $Y^\varepsilon_{i,s}$, on $\Gamma$ pointing outward $Y_{s}$ and on $\partial \Omega$ pointing outward, respectively; and also, we denote by $\di \hmeas$ the $(d-1)$-dimensional Hausdorff measure.
In addition, we define the sets:
\begin{align*}
    I^{\varepsilon} 
    \coloneqq \{ 
    i \in \ZZ^d \colon Y^\varepsilon_i \subset \Omega
    \},~~
    \Omega_s^{\varepsilon} 
    \coloneqq \bigcup_{i\in I^\varepsilon}
Y_{i,s}^{\varepsilon},~~
    \Omega_f^{\varepsilon} 
    \coloneqq \Omega \setminus \Omega_s^{\varepsilon},~~
    \Gamma^\varepsilon 
    \coloneqq \bigcup_{i \in I^\varepsilon} \Gamma^\varepsilon_i,
\end{align*}
see \cref{fig:1}.

\begin{figure}[b]
  \sidecaption
%\centering
%\def\svgwidth{0.5\columnwidth}
%\import{./figures/}{oneway-paper.pdf_tex}
\includegraphics[scale=.25]{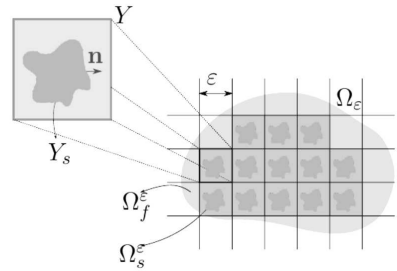}
\caption{Reference cell $Y$ and domain $\Omega$.}
\label{fig:1}
\end{figure}

The magnetic permeability $\bfa$ is a $d \times d$
%\emph{symmetric} and \emph{piecewise $C^{\alpha}$-continuous}
matrix satisfying the following conditions:
\begin{enumerate}[label=(A{\arabic*}),ref=\textnormal{(A{\arabic*})}]
\item \label{cond:a-periodic} $Y-$periodicity: for all $z
  \in \RR^d$, for all $m \in \ZZ$, and for all $k \in \{ 1,\ldots, d \}$ we have: 
\begin{equation*}
%\label{eq:3}
\bfa (z + m \ee^k) = \bfa (z).
\end{equation*}
  \item\label{cond:a-bound} Boundedness and measurability: there exists $\Lambda > 0$ such that: 
\begin{equation*}
%\label{eq:2}
\norm{\bfa}_{L^{\infty} \left( \RR^d \right)} \le \Lambda.
\end{equation*}
  \item\label{cond:a-elliptic} Ellipticity: there exists
    $\lambda > 0$ such that for all $\xi \in \RR^d$, for all $x \in
    \RR^d$, we have: 
\begin{equation*}
%\label{eq:1}
\bfa (x) \xi \cdot \xi \ge \lambda \abs{\xi}^2.
\end{equation*}
\end{enumerate}
Denote by $\fraM (\lambda,\Lambda)$ the set of matrices that satisfy
\ref{cond:a-bound}-\ref{cond:a-elliptic}, and
$\fraM_{\per}(\lambda,\Lambda)$ the subset of matrices in $\fraM(\lambda,\Lambda)$
that also satisfy \ref{cond:a-periodic}. 

\section{Statement and discussion of the main result}
\label{sec:main-result}
The main result of this paper is summarized in the following theorem.
\label{sec:gradient-estimate}
\begin{theorem}[Global Lipschitz Estimate]
\label{sec:gradient-estimate-1}
Let $\Omega$ be a bounded $C^{1,\alpha}$ domain, $g \in C^{1,\alpha'}(\partial\Omega)$
and $f \in L^{\infty}( {\Omega})$, where $0 < \alpha'< \alpha < 1$. Suppose that
$\bfa \in \fraM_{\per}(\lambda,\Lambda)$ is piecewise
$C^{\alpha}$-continuous. There exists % $\varepsilon_0=\varepsilon_0 (\alpha,\alpha',\lambda, \Lambda, d,\Omega) > 0$ and 
$C=C(\alpha,\alpha',\lambda, \Lambda, d,\Omega) > 0$ such that, for all $\varepsilon > 0$, the (unique) solution $\varphi^{\varepsilon}$ of:
\begin{subequations}
  \label{eq:4}
  \begin{align}
    \label{eq:5}
    -\Div \left[ \bfa \left( \frac{x}{\varepsilon} \right) \nabla
    \varphi^{\varepsilon} \right]
    &= f, \quad &&\text{ in }\Omega\\
    \varphi^{\varepsilon}
    &= g, \quad &&\text{ on }\partial \Omega,\label{DC}
  \end{align}
\end{subequations}
satisfies:
\begin{align}
\label{eq:34}
  \norm{\nabla \varphi^{\varepsilon}}_{L^{\infty}(\Omega)}
  &\le C \left( \norm{g}_{C^{1,\alpha'}(\partial\Omega)} + \norm{f}_{L^{\infty}( {\Omega})} \right).
\end{align}
\end{theorem}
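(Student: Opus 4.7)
The plan is to dichotomize on the scale $\varepsilon$: fix a threshold $\varepsilon_0>0$ (to be determined by the compactness argument below) and split the proof into the two regimes $\varepsilon \geq \varepsilon_0$ and $0<\varepsilon<\varepsilon_0$. In the first regime, the piecewise $C^{\alpha}$ structure of $\bfa(\cdot/\varepsilon)$ lives on a macroscopic scale bounded below by $\varepsilon_0$, so the gradient bound of Li--Vogelius applied directly to \eqref{eq:4} yields \eqref{eq:34} with a constant possibly depending on $\varepsilon_0$, which is harmless since $\varepsilon_0$ will be fixed at the end. The genuine work lies in the regime $0<\varepsilon<\varepsilon_0$, where I would adapt the Avellaneda--Lin compactness method to piecewise H\"older coefficients.

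For interior Lipschitz estimates on a ball $B_{2r}\subset \Omega$, I would carry out the standard three-step argument. \emph{Improvement:} by contradiction, if $u$ solves $-\Div(\bfa(x/\varepsilon)\nabla u)=F$ on $B_1$ with $\|u\|_{L^2(B_1)}\leq 1$ and sufficiently small data, then for $\varepsilon$ below a threshold $\theta$ there exists an affine function $P(x)=p+q\cdot x$ with $|p|+|q|\leq C$ such that $\|u-P\|_{L^2(B_{\theta})}\leq \theta^{1+\sigma}$ for some fixed $\sigma\in(0,\alpha')$. Were this false, I would extract a sequence $u^k$ with $\varepsilon_k\to 0$; the De Giorgi--Nash--Moser estimate gives uniform $C^{\beta}$ precompactness, so after extraction $u^k$ converges to $u_0$ solving the homogenized constant-coefficient equation $-\Div(\bfa^{\eff}\nabla u_0)=0$, and $u_0\in C^{1,1}_{\loc}$ furnishes the required affine approximation, contradicting the assumption. \emph{Iteration:} rescaling and reapplying the improvement yields $\|u-P_k\|_{L^{\infty}(B_{\theta^k})}\leq C\theta^{k(1+\sigma)}$ as long as $\theta^k\geq \varepsilon$. \emph{Blow-up:} at the smallest scale $\rho\simeq\varepsilon$, rescale so that $\bfa(\cdot/\varepsilon)$ becomes $\bfa(\cdot)$ on a unit ball; Li--Vogelius then supplies the Lipschitz bound on this scale, and combined with the iterated decay gives a uniform interior gradient bound.

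For boundary Lipschitz estimates near $x_0\in\partial\Omega$, the plan is analogous but with several structural additions. Using the $C^{1,\alpha}$ regularity of $\partial\Omega$, I would flatten the boundary locally via a $C^{1,\alpha}$ diffeomorphism; this transformation perturbs $\bfa$ and the right-hand side in a controlled way while preserving the piecewise H\"older hypothesis. The compactness argument is then repeated on a half-ball $B_1^+$ with Dirichlet data $g$ on the flat part. The limiting problem is the homogenized equation with constant coefficients and boundary data in $C^{1,\alpha'}$; the classical Schauder estimate recalled in \cref{sec:an-appendix} produces a $C^{1,\alpha'}$ affine approximation at the boundary, closing the improvement step. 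For $\varepsilon<\varepsilon_0$ the inclusions $Y^{\varepsilon}_{i,s}$ with $i\in I^{\varepsilon}$ remain at distance $\gtrsim \varepsilon$ from $\partial\Omega$, so $\bfa(\cdot/\varepsilon)$ is smooth in a thin boundary layer, which facilitates the final blow-up step there. Combining interior and boundary bounds, together with the trivial case $\varepsilon\geq\varepsilon_0$, yields \eqref{eq:34}.

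The main obstacle I anticipate is the interplay between the discontinuities of $\bfa$ across $\Gamma^{\varepsilon}$ and the compactness method: identifying $\bfa^{\eff}$ and performing the improvement step require cell-problem correctors $\chi$ with $\nabla\chi\in L^{\infty}(Y)$, which is precisely where Li--Vogelius must be invoked on the unit cell with $C^{1,\alpha}$ interface $\Gamma$. Keeping the Lipschitz constant uniform in $\varepsilon$ therefore hinges on the blow-up step at scale $\sim\varepsilon$ and the corrector bound meshing consistently, since both rest on the same piecewise H\"older hypothesis; an additional delicate point is that the $L^2$-to-$L^{\infty}$ passage in the iteration, ordinarily free under classical De Giorgi--Nash, must be reverified carefully for coefficients that are only piecewise smooth.
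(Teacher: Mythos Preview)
Your overall strategy---splitting into $\varepsilon\ge\varepsilon_0$ (handled directly by Li--Vogelius, cf.\ \cref{sec:impr-grad-estim}) and $0<\varepsilon<\varepsilon_0$ (handled by compactness)---and your treatment of the interior Lipschitz estimate match the paper. One refinement you should build in: the naive blow-up $x\mapsto \varepsilon x + x_0$ at an arbitrary point $x_0$ yields subdomains (the intersections of $B(0,r)$ with the periodic interfaces) whose $C^{1,\alpha}$-modulus depends on $x_0$ and can degenerate into cusps; the paper avoids this by centering the blow-up at the lattice point $z_0^{\varepsilon}$ of the cell containing $x_0$, so that the rescaled coefficient is $\bfa(x+z_0^{\varepsilon}/\varepsilon)=\bfa(x)$ with a fixed interface geometry (see the Blow-up step in the proof of \cref{sec:lipschitz-estimate} and \cref{sec:stat-disc-main}).

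Your boundary argument, however, has a genuine gap. The claim that ``for $\varepsilon<\varepsilon_0$ the inclusions $Y^{\varepsilon}_{i,s}$ with $i\in I^{\varepsilon}$ remain at distance $\gtrsim\varepsilon$ from $\partial\Omega$, so $\bfa(\cdot/\varepsilon)$ is smooth in a thin boundary layer'' conflates the fluid--solid decomposition $\Omega_s^{\varepsilon}$ (which indeed uses only $i\in I^{\varepsilon}$) with the coefficient in \eqref{eq:5}: the latter is $\bfa(x/\varepsilon)$ with $\bfa$ $Y$-periodic on all of $\RR^d$, so its interfaces $\varepsilon(\Gamma+\ZZ^d)$ run right up to and through $\partial\Omega$. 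There is no smooth boundary layer, and the final blow-up near $\partial\Omega$ faces the same $x_0$-dependence issue just described, now compounded by the curved boundary. Moreover, flattening by a $C^{1,\alpha}$ diffeomorphism $\Psi$ replaces $\bfa(\cdot/\varepsilon)$ by a coefficient of the form $\bfa(\Psi(\cdot)/\varepsilon)$ (times Jacobian factors), which is no longer periodic; the improvement step identifying a constant-coefficient homogenized limit is then not available as you stated it.

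The paper therefore does not take the direct half-ball route. Instead it follows the Avellaneda--Lin/Shen scheme: first the boundary H\"older estimate (\cref{sec:bound-hold-estim-1}), then Green's function bounds (\cref{sec:bound-hold-estim-2}), then a uniform $L^{\infty}$ bound on the gradient of the Dirichlet corrector $\Phi^{i,\varepsilon}$ of \eqref{eq:36} (\cref{sec:lipschitz-estimate-2}); the boundary Lipschitz compactness (\cref{sec:bound-hold-estim-3}) is then closed by comparing $\varphi^{\varepsilon}$ not with an affine function but with combinations of the $\Phi^{i,\varepsilon}$, which already encode the oscillating coefficient and the boundary. Your observation that $\nabla\vec{\omega}\in L^{\infty}(Y)$ via Li--Vogelius on the unit cell is exactly right and is one of the ingredients used in proving \cref{sec:lipschitz-estimate-2}, but you need the full corrector/Green's-function machinery, not just a boundary-layer smoothness argument, to obtain \eqref{eq:73}.
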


\begin{remark}\label{sec:impr-grad-estim}
  For each $\varepsilon > 0$, let
  $N_{\varepsilon}$ be the number of subdomains inside $\Omega$ such
  that in each of them the function $\bfa$ is
  $C^{\alpha}$-continuous. Denote those subdomains by $D_m,~1\le m \le N_{\varepsilon}$. Then, for
  $0 < \alpha' < \min \left\{ \alpha, \frac{\alpha}{(\alpha+1)d}
  \right\}$,
%if $f = \Div \bfF$ for some $\bfF \in
%C^{\alpha'}(\overline{\Omega})$,
by \cite[Corollary 1.3]{liGradientEstimatesSolutions2000}, one has:
\begin{align}
\label{eq:6}
  \norm{\nabla \varphi^{\varepsilon}}_{L^{\infty}(\Omega)}
  \le C \left( \norm{g}_{C^{1,\alpha'}(\partial \Omega)} + \norm{f}_{L^{\infty}(\Omega)}\right), %\norm{\bfF}_{C^{\alpha'}\left( \overline{\Omega},\RR^d \right)}  \right),
\end{align}
where $C$ depends on $\Omega, d, \alpha, \alpha', \lambda, \Lambda, \norm{\bfa}_{C^{\alpha'}(\overline{D_m},\RR^{d\times d})}$ and the
$C^{1,\alpha}$-modulus of $\cup_{m=1}^{N_{\varepsilon}} \partial
D_m$ (defined in page 92
  \cite{liGradientEstimatesSolutions2000}). As $\varepsilon \to 0,$ the number $N_\varepsilon$ increases, while the sizes of the subdomains decrease, which leads to the blow-up of the  $C^{1,\alpha}$-modulus. Therefore, estimate \eqref{eq:6} is not
uniform in $\varepsilon$.

However, if $\varepsilon_0 \le \varepsilon \le 1$ for some constant $\varepsilon_0 > 0$,
then one can control the number, size, distance and
  $C^{1,\alpha}$-modulus  of the subdomains in $\Omega$, uniformly with respect
  to $\varepsilon$. Note
  that the upper and lower bounds of those quantities are positive and independent of $\varepsilon$. Thus, by \cite[Corollary 1.3]{liGradientEstimatesSolutions2000}, there exists $C$ independent of $\varepsilon$ such that \eqref{eq:34} holds when $\varepsilon_0 \le \varepsilon \le 1$. Therefore, \cref{sec:gradient-estimate-1} will be proven, if one can specify a constant $\varepsilon_0 > 0$ such that \eqref{eq:34} holds for $0 < \varepsilon < \varepsilon_0.$
\end{remark}

Our proof of \cref{sec:gradient-estimate-1} follows the classical steps in regularity theory: (i) derive an interior Lipschitz estimate, (ii) derive a boundary Lipschitz estimate, and finally (iii) combine the estimates in (i) and (ii) to obtain the global Lipschitz estimate.
Step (i) is obtained via the \textit{compactness method}. For step (ii), we additionally need to establish the following preliminary results:
\begin{itemize}
\item Interior and boundary H\"{o}lder estimates, see \cref{sec:lipschitz-estimate,sec:lipschitz-estimate-5,sec:lipschitz-estimate-3}.
\item Estimates for the Green's function, which are obtained using the H\"{o}lder bounds above, see \cref{sec:bound-hold-estim-2}.  The existence of the Green's function for scalar uniformly elliptic equations is established in
  \cite{littmanRegularPointsElliptic1963,gruterGreenFunctionUniformly1982,kenigEllipticEquationLuk1985}.
\item Estimates for the Dirichlet boundary corrector, see \cref{sec:lipschitz-estimate-2}.
\end{itemize}

If the coefficient $\bfa$ is not H\"{o}lder continuous, the classical results in \cite{avellanedaCompactnessMethodsTheory1987,prangeUniformEstimatesHomogenization2014,shenPeriodicHomogenizationElliptic2018} can not be applied directly. Nevertheless,  some of their proofs can be adapted for the case at hand. In those situations, we will explicitly point out what needs to be  modified in their proofs, in order to relax the continuity assumption on the coefficient matrix $\bfa$.

\section{Interior estimates}
\label{sec:InteriorEst}
We start with an estimate for homogenized equations, i.e. the equations with constant coefficients, which are limits of some fine-scale problems.

\begin{lemma}
\label{interation-lem}
Let
$\lambda > 0,$ $\Lambda > 0$, $\gamma > 0$ and $0 < \mu < \frac{1}{2}$ be fixed. For each constant
matrix $\bfb \in \fraM (\lambda,\Lambda)$,
$h \in L^{d + \gamma}( {B}(x_0,1))$, with
$\norm{h}_{L^{d + \gamma}( {B}(x_0,1))} \le 1$, there exists $\theta = \theta (\gamma,\mu,\lambda,\Lambda,d) > 0$ such that if
$\phi \in H^1(B(x_0,1))$ satisfies:
\begin{align*}
-\Div \left( \bfb \nabla \phi \right) =  h \text{ in } B(x_0,1),
\end{align*}
then the following estimate holds:
\begin{align}
\label{eq:45}
\sup_{\abs{x-x_0} < \theta} \abs{\phi(x) - \phi(x_0) - (x-x_0)\cdot
  \dashint_{B(x_0,\theta)}\nabla \phi (z) \di z }< \theta^{1+3\mu/4}.
\end{align}
\end{lemma}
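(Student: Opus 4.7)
My plan is to reduce \eqref{eq:45} to a Taylor-remainder estimate controlled by the interior H\"{o}lder continuity of $\nabla\phi$. Because $\bfb$ is constant, the equation rewrites as $b_{ij}\partial_{ij}\phi=-h$, so classical Calder\'{o}n--Zygmund theory gives $\phi\in W^{2,d+\gamma}_{\loc}(B(x_0,1))$ with an interior bound of the form
\[
\|\phi\|_{W^{2,d+\gamma}(B(x_0,1/2))}\le C\bigl(\|\phi\|_{L^{2}(B(x_0,1))}+\|h\|_{L^{d+\gamma}(B(x_0,1))}\bigr).
\]
Since $d+\gamma>d$, Sobolev embedding yields $\nabla\phi\in C^{\beta}(\overline{B(x_0,1/2)})$ with $\beta=\gamma/(d+\gamma)>0$. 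Together with the hypothesis $\|h\|_{L^{d+\gamma}}\le 1$ and the usual normalization of the compactness framework (namely $\|\phi\|_{L^{2}(B(x_0,1))}\le 1$, which I read as implicit in the statement), this gives $[\nabla\phi]_{C^{\beta}(B(x_0,1/2))}\le C_{1}$ with $C_{1}=C_{1}(\lambda,\Lambda,d,\gamma)$.

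For the Taylor step, I would write, for an arbitrary $v\in\RR^{d}$,
\[
\phi(x)-\phi(x_{0})-(x-x_{0})\cdot v
=\int_{0}^{1}\bigl[\nabla\phi(x_{0}+t(x-x_{0}))-v\bigr]\cdot(x-x_{0})\,dt,
\]
take $v=\dashint_{B(x_{0},\theta)}\nabla\phi$, and restrict to $|x-x_{0}|<\theta<1/2$. Since $v$ is an average of $\nabla\phi$ on $B(x_{0},\theta)$, the $C^{\beta}$ bound gives $|\nabla\phi(y)-v|\le C_{1}(2\theta)^{\beta}$ for every $y\in B(x_{0},\theta)$, and hence
\[
\sup_{|x-x_{0}|<\theta}\bigl|\phi(x)-\phi(x_{0})-(x-x_{0})\cdot v\bigr|\le 2^{\beta}C_{1}\,\theta^{1+\beta}.
\]

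It then remains to choose $\theta=\theta(\gamma,\mu,\lambda,\Lambda,d)$ so that $2^{\beta}C_{1}\theta^{\beta}<\theta^{3\mu/4}$. This is possible precisely when the Sobolev gain strictly exceeds the target H\"{o}lder exponent, i.e.\ $\beta>3\mu/4$, equivalently $\gamma/(d+\gamma)>3\mu/4$. The only real obstacle is thus this compatibility between the integrability parameter $\gamma$ and the regularity parameter $\mu$; it is presumably built into the admissible range of parameters in the compactness iteration that follows (consistent with the restriction $\mu<1/2$ in the statement), and once the strict gap is present any $\theta<(2^{\beta}C_{1})^{-1/(\beta-3\mu/4)}$ realises~\eqref{eq:45}.
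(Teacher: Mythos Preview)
Your approach is essentially the paper's: obtain interior $C^{1,\beta}$ regularity for the constant-coefficient equation (the paper invokes its Schauder appendix, \cref{sec:an-appendix-1}, rather than Calder\'on--Zygmund plus Sobolev, but the outcome is the same $\beta=\gamma/(d+\gamma)$), then control the first-order Taylor remainder and choose $\theta$ small. On the two concerns you flag: the normalization of $\phi$ is indeed missing from the statement and is used tacitly by the paper as well---its estimate \eqref{eq:44} silently drops the $\|\phi\|_{H^1}$ term, which is only legitimate under a size hypothesis on $\phi$ coming from the surrounding compactness argument; and the compatibility $3\mu/4<\gamma/(d+\gamma)$ you need is actually weaker than the paper's implicit requirement $\mu\le\gamma/(d+\gamma)$ (it asserts $\phi\in C^{1,\mu}$ directly). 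In the only application, \cref{sec:lipschitz-estimate}, one has $f\in L^{\infty}$, so $\gamma$ may be taken arbitrarily large and both constraints become vacuous.
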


\begin{proof}
By the classical Schauder estimate for the scalar equation with constant
coefficients (\cref{sec:an-appendix-1}),
we have $\phi \in C^{1,\mu} \left( B \left( x_0, 1/4 \right)
\right)$ and:
\begin{align}
\label{eq:44}
  \begin{split}
    \norm{\phi}_{C^{1,\mu}\left( B \left( x_0,1/4 \right) \right)}
    &\le C (\gamma,\mu,\lambda,\Lambda,d) \left( \norm{h}_{L^{d+\gamma}\left( B(x_0,3/8)
        \right)} + \norm{\phi}_{H^1(B(x_0,3/8))} \right)\\
    &\le C (\gamma,\mu,\lambda,\Lambda,d)
    \norm{h}_{L^{d + \gamma}( {B}(x_0,1))}\\
    &\le C(\gamma,\mu,\lambda,\Lambda,d).
  \end{split}
\end{align}
For $0 < \theta < \frac{1}{4}$ and $\abs{x - x_0} < \theta$, there
exist $z_x$ such that:  
\begin{align*}
&\abs{\phi(x) - \phi(x_0) - (x-x_0)\cdot \dashint_{B(x_0,\theta)} \nabla
  \phi (z) \di z}\\
  &\quad= \abs{\frac{x-x_0}{\abs{B(x_0,\theta)}} \cdot \int_{B(x_0,\theta)}
    \left( \nabla \phi(z_x) - \nabla \phi (y) \right)\di y }\\
  &\quad\le C (\gamma,\mu,\lambda,\Lambda,d) \abs{x-x_0}^{1+\mu}\\
  &\quad\le C (\gamma,\mu, \lambda,\Lambda,d) \theta^{1+\mu}.
\end{align*}
Choosing $\theta$ small enough so that $C (\gamma,\mu, \lambda,\Lambda,d)
\theta^{1+\mu} < \theta^{1+ 3\mu/4}$, we obtain \eqref{eq:45}.
\end{proof}

The fact that $\theta$ doesn't depend on the matrix $\bfb$ or the source term $h$ is crucial for the contradiction argument in \cref {sec:lipschitz-estimate} below.
We now state the interior Lipschitz estimate. Note
that, here, $\bfa$ is not necessarily H\"{o}lder continuous in the domain
$\Omega$.

\begin{proposition}[Interior Lipschitz Estimate I]
\label{sec:lipschitz-estimate}
Suppose that $\bfa \in \fraM_{\per}(\lambda,\Lambda)$ and
$f \in L^{\infty}(\Omega)$. %, for some $0 < \gamma < 1$. 
Fix
$x_0\in\Omega$ and $R > 0$, so that $B(x_0,R)\subset \Omega$. 
There exist
$\varepsilon_0 = \varepsilon_0 (\lambda,\Lambda,R, d) > 0$ and $C = C (\lambda,\Lambda,R, d) > 0$ such that, for all $0 < \varepsilon < \varepsilon_0$ and for every
weak solution $\varphi^{\varepsilon} \in H^1(B(x_0,R))$ of the equation
$-\Div \left[ \bfa \left( \frac{x}{\varepsilon} \right)\nabla
  \varphi^{\varepsilon} \right] = f$ in $B(x_0,R)$,  the following estimate holds:
\begin{align}
\label{eq:35}
  \norm{\nabla \varphi^{\varepsilon}}_{L^{\infty}(B(x_0,R/2))}
  \le C  \left( \norm{\varphi^{\varepsilon}}_{L^\infty(B(x_0,R))} + \norm{f}_{L^{\infty}( {B}(x_0,R))} \right).
\end{align}

\end{proposition}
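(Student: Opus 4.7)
The plan is to prove the estimate via the \emph{compactness method} of Avellaneda--Lin, following the three-step scheme of improvement, iteration, and blow-up. After translating $x_0$ to the origin, dilating so that $R=1$, and dividing $\varphi^{\varepsilon}$ by $\norm{\varphi^{\varepsilon}}_{L^{\infty}(B(x_0,R))} + \norm{f}_{L^{\infty}(B(x_0,R))}$, it suffices to establish the bound at the origin under the normalization $\norm{\varphi^{\varepsilon}}_{L^{\infty}(B(0,1))} \le 1$ and $\norm{f}_{L^{\infty}(B(0,1))} \le 1$; the bound at a generic point of $B(0,1/2)$ then follows by the same argument after a symmetric translation.

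The core is a one-step improvement: for a fixed $\mu\in(0,1/2)$, there exist $\theta\in(0,1/4)$ and $\varepsilon_0>0$, depending only on $\lambda,\Lambda,d,\mu$, such that for every $\varepsilon\in(0,\varepsilon_0)$ there is an affine $P(x)=c+q\cdot x$ with $\abs{q}\le C_0$ and
\begin{align*}
\sup_{B(0,\theta)} \abs{\varphi^{\varepsilon} - P} \le \theta^{1+3\mu/4}.
\end{align*}
This is proved by contradiction: if it fails, one extracts sequences $\varepsilon_k\downarrow 0$, $\bfa^k\in\fraM_{\per}(\lambda,\Lambda)$, $f_k$, and $\varphi^k$ (with unit $L^\infty$ bounds, satisfying the PDE) violating the conclusion for every candidate $P$. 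Caccioppoli provides a uniform $H^1$ bound on smaller balls; the De Giorgi--Nash--Moser interior H\"older estimate \cite[Theorem 8.24]{gilbargEllipticPartialDifferential2001} upgrades weak convergence of $\varphi^k$ to uniform convergence to some $\varphi_0$ on compact subsets, and $f_k\rightharpoonup f_0$ in $L^p$. By the Murat--Tartar compactness of $H$-convergence combined with a weak-$*$ extraction on $Y$ (exploiting the $Y$-periodicity of $\bfa^k$), a further subsequence yields $H$-convergence of $\bfa^k(\cdot/\varepsilon_k)$ to a \emph{constant} matrix $\bfb\in\fraM(\lambda,\Lambda)$. Hence $-\Div(\bfb\nabla\varphi_0)=f_0$ in $B(0,3/4)$, and \cref{interation-lem} provides an affine approximant for $\varphi_0$ with the required decay; passing to the limit in the uniform approximation contradicts the assumed failure for large~$k$.

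A standard dyadic iteration in powers of $\theta$ follows: rescaling the improvement at scale $\theta^j$ produces a problem of the same form with coefficient $\bfa(\cdot/(\varepsilon/\theta^j))$, which stays in $\fraM_{\per}(\lambda,\Lambda)$ since periodicity is preserved. As long as $\varepsilon/\theta^j<\varepsilon_0$, the improvement can be reapplied, yielding slopes $q_j$ with $\abs{q_{j+1}-q_j}\le C\theta^{3j\mu/4}$; summation gives $\abs{q_j}\le C$ and the Campanato-type decay $\sup_{B(0,\theta^j)} \abs{\varphi^{\varepsilon} - P_j} \le C\theta^{j(1+3\mu/4)}$. When $\theta^j$ first drops below $\varepsilon/\varepsilon_0$, we rescale the corresponding ball back to the unit ball; the rescaled equation has coefficient $\bfa(y)$ with no small parameter, so the Lipschitz bound on $B(0,1/2)$ follows from \cite[Corollary 1.3]{liGradientEstimatesSolutions2000}. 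Combining the two regimes yields $\abs{\nabla\varphi^{\varepsilon}(0)}\le C$, and the translation argument then gives \eqref{eq:35}.

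The principal obstacle is the compactness step. Because no regularity on $\bfa$ beyond measurability is assumed, the limit matrix $\bfb$ cannot be identified through pointwise convergence; one must rely on $H$-convergence and verify that $\bfb$ is actually a \emph{constant} matrix, a point that hinges on the $Y$-periodicity of $\bfa^k$ together with a weak-$*$ extraction on the fundamental cell. A secondary technical issue is ensuring that the source term in the limit equation satisfies the $L^{d+\gamma}$ hypothesis of \cref{interation-lem}, which is automatic from the uniform $L^\infty$ bound on $f_k$ on the bounded ball $B(0,3/4)$.
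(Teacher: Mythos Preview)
Your overall three-step architecture matches the paper, but the iteration step has a genuine gap. You claim that after subtracting an affine $P(x)=c+q\cdot x$ and rescaling, one obtains ``a problem of the same form.'' This is false for variable coefficients: if $w(x)=\theta^{-(1+3\mu/4)}\bigl(\varphi^{\varepsilon}(\theta x)-P(\theta x)\bigr)$, then
\[
-\Div\Bigl[\bfa\Bigl(\tfrac{\theta x}{\varepsilon}\Bigr)\nabla w\Bigr]
=\theta^{1-3\mu/4}f(\theta x)\;+\;\theta^{-3\mu/4}\,\Div_x\Bigl[\bfa\Bigl(\tfrac{\theta x}{\varepsilon}\Bigr)q\Bigr],
\]
and the second term is a divergence-form source with an $L^{\infty}$ (not $L^{d+\gamma}$) flux that does not decay under the rescaling. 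Your improvement lemma (\cref{interation-lem}) does not accommodate such a term, and the constant-coefficient Schauder estimate for $-\Div(\bfb\nabla\phi)=\Div F$ with $F$ merely bounded does not yield $C^{1,\mu}$ regularity, so the compactness argument at the next scale breaks down. The paper avoids this by building the periodic corrector into the approximant: it subtracts $\bigl[x-x_0+\varepsilon\vec{\omega}(x/\varepsilon)\bigr]\cdot q$, and since $x_i+\varepsilon\omega^i(x/\varepsilon)$ is an \emph{exact} solution of the homogeneous equation, the rescaled function satisfies exactly the same equation with only the (decaying) $\theta^{1-\mu/2}f(\theta x)$ on the right-hand side. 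In the improvement step the corrector term is harmless (it vanishes in the limit since $\norm{\vec{\omega}}_{L^{\infty}}<\infty$), so the contradiction argument is unchanged; its sole purpose is to make the iteration close.

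There is a second, more subtle issue in your blow-up step. You rescale around the translated point $x_0$ and then invoke \cite[Corollary~1.3]{liGradientEstimatesSolutions2000}. But as the paper stresses in \cref{sec:stat-disc-main}, the Li--Vogelius constant depends on the $C^{1,\alpha}$-modulus of the subdomains formed by intersecting the rescaled ball with the periodic inclusion pattern; as $x_0$ varies this modulus is not uniformly controlled (the intersections can develop arbitrarily thin slivers). Since you ultimately need the pointwise bound uniformly over all centers in $B(0,1/2)$, this matters. The paper's fix is to recenter the blow-up at the lattice cell center $z_0^{\varepsilon}$ containing $x_0$, use the inclusions \eqref{eq:8} between balls centered at $z_0^{\varepsilon}$ and at $x_0$, and apply \cite[Theorem~1.1]{liGradientEstimatesSolutions2000} on a domain whose geometry is independent of $x_0$.
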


\begin{proof}
  By dilation, we may assume that $R = 1$.  Fix $0< \mu < \frac{1}{2}$. We prove, by the compactness method, that
there exists  $\varepsilon_0=\varepsilon_0(\lambda,\Lambda,d,\mu)$ so that \eqref{eq:35} holds for all
$0 < \varepsilon < \varepsilon_0$. To do this, we only need to show
that there exists $C > 0$, independent of $\varepsilon$, such that:
\begin{align}
  \label{eq:60}
\max \left\{ \norm{\varphi^{\varepsilon}}_{L^{\infty}(B(x_0,1))},
  \norm{f}_{L^{\infty}( {B}(x_0,1))} \right\} \le 1 \text{ implies } \norm{\nabla
  \varphi^{\varepsilon}}_{L^{\infty}(B(x_0,1/2))} \le C.
\end{align}
Let $\vec{\omega} \coloneqq (\omega^1,\ldots, \omega^d)$, where $\omega^i
\in H^1_{\per}(Y)/\RR,~1\le i \le d,$ is the solution of the \textit{cell problem}:
\begin{equation}
\label{eq:pp453}
-\Div_{\yy} \left[ 
{\bfa(\yy) }\left(\ee^i +\nabla_{\yy} \omega^i(\yy)
  \right)  \right]
  = 0 \text{ in } Y.
  \end{equation}
%De Giorgi-Nash-Moser estimate implies that
%$\norm{\omega}_{L^{\infty}}(Y) < \infty$.
  \begin{enumerate}[wide]
  \item \emph{Improvement.} \quad In this step, we prove by contradiction
    that:

For fixed $0< \mu < \frac{1}{2}$, there exist $\theta$ and $\varepsilon^*$, with $0 < \theta <\frac{1}{4}, ~0<\varepsilon^* < 1$, depending on
$\lambda,\Lambda,d$ and $\mu$, such that if
$\bfa \in \fraM_{\per}(\lambda,L)$,
$f \in L^{\infty}( {B}(x_0,1))$,
$\varphi^{\varepsilon} \in H^1(B(x_0,1))$ satisfy:
\begin{subequations}
\begin{align}
\label{eq:42}
-\Div \left[ \bfa \left( \frac{x}{\varepsilon} \right) \nabla
  \varphi^{\varepsilon} \right]
  &= f \text{ in } B(x_0,1),\\
  \label{eq:50}
  \max \left\{ \norm{\varphi^{\varepsilon}}_{L^{\infty}(B(x_0,1))},
  \norm{f}_{L^{\infty}( {B}(x_0,1))} \right\}
  &\le 1,
\end{align}
\end{subequations}
then, for all $0 < \varepsilon < \varepsilon^*$, we have: 
\begin{align}
\label{eq:43}
  \sup_{\abs{x-x_0} < \theta}\abs{\varphi^{\varepsilon}(x)-
  \varphi^{\varepsilon}(x_0)- \left[ x -x_0 + \varepsilon \vec{\omega} \left(
  \frac{x}{\varepsilon} \right) \right] \cdot
  \dashint_{B(x_0,\theta)}\nabla
  \varphi^{\varepsilon}(z) \di z}\le \theta^{1+\mu/2},
\end{align}
where $\vec{\omega}$ solves \eqref{eq:pp453}. 

Take $\theta$ as in \eqref{eq:45} of \cref{interation-lem}. By contradiction, suppose there exist
sequences: 
\begin{align*}
\varepsilon_n \to 0, \quad \bfa_n \in \fraM_{\per} (\lambda,L), \quad
  f_n \in L^{\infty}\left( B(x_0,1) \right), \text{ and } \varphi_n
  \in H^1 (B(x_0,1))
\end{align*}
satisfying: 
\begin{align}
\label{eq:46}
-\Div \left[ \bfa_n \left( \frac{x}{\varepsilon_n}  \right) \nabla \varphi_n
  \right]
  &= f_n \text{ in } B(x_0,1),\\
  \label{eq:47}
  \max \left\{
  \norm{\varphi_n}_{L^{\infty}(B(x_0,1))},
  \norm{f_n}_{L^{\infty}( {B}(x_0,1))}\right\}
  &\le 1,
\end{align}
such that: 
\begin{align}
\label{eq:48}
  \sup_{\abs{x - x_0} < \theta} \abs{ \varphi_n (x) - \varphi_n(x_0) -
  \left[  x - x_0 +\varepsilon_n \vec{\omega} \left( \frac{x}{\varepsilon_n}
  \right) \right] \cdot \dashint_{B(x_0,\theta)}\nabla \varphi_n (z) \di z}
  > \theta^{1+\mu/2}.
\end{align}
Let $\calA_n \in \fraM (\lambda,\Lambda)$ denote the effective matrix corresponding to
$\bfa_n$. By the Banach-Alaoglu theorem, the Caccioppoli inequality, the Rellich-Kondrachov theorem  and the Schauder theorem (see, e.g. \cite[Theorem 4.4]{giaquintaIntroductionRegularityTheory2012} and \cite[Theorem 3.16, 6.4 and 9.16]{brezisFunctionalAnalysisSobolev2011}), there exist functions $\varphi_0 \in L^2(B(x_0,1)),$ $f_0 \in L^{\infty}(B(x_0,1))$ and a constant matrix $\bfa_0 \in \fraM (\lambda,\Lambda)$ such that,  up to subsequences, we have: 
\begin{align*}
  \varphi_n
  &\rightharpoonup \varphi_0 \text{ in } L^2(B(x_0,1))\\
  f_n
  &\overset{*}{\rightharpoonup} f_0 \text{ in } L^{\infty}(B(x_0,1))\\
  \varphi_n
  &\rightharpoonup \varphi_0 \text{ in } H^1(B(x_0,1/2))\\
  f_n
  &\to f_0 \text{ in } H^{-1}(B(x_0,1))\\
  \calA_n
  &\to \bfa_0 .
\end{align*}
By \cite[Theorem 13.4 (iii)]{cioranescuIntroductionHomogenization1999}
or \cite[Theorem 2.3.2]{shenPeriodicHomogenizationElliptic2018}, we
have $\varphi_0$ is the solution of: 
\begin{align}
\label{eq:49}
-\Div \left( \bfa_0 \nabla \varphi_0 \right) = f_0 \text{ in } B(x_0,1/2).
\end{align} 
Fix $x \in B(x_0,1)$ and let $U \subset B(x_0,1)$ be an open
neighborhood of $x$. By the De Giorgi-Nash-Moser Theorem \cite[Theorem
8.24]{gilbargEllipticPartialDifferential2001}, there exists $0 < \beta = \beta (d, \lambda/\Lambda) <
1$ such that: 
\begin{align*}
  \norm{\varphi_n}_{C^{\beta}(\overline{U})}
  \le C \left(\norm{\varphi_n}_{L^{\infty}(B(x_0,1))} +
  \norm{f_n}_{L^{\infty}( {B}(x_0,1))}\right) \le 2C.
\end{align*}
By the Arzela-Ascoli Theorem, up to a subsequence, $\varphi_n$ uniformly
converges to $\varphi^{*}$ in $C(U)$ for some $\varphi^{*}$. Since $\varphi_n \rightharpoonup \varphi_0$ in $L^2(B(x_0,1))$, we
conclude that $\varphi^{*} = \varphi_0$ a.e. in $U$.  Therefore, $\lim_{n \to \infty} \varphi_n(x) = \varphi_0(x)$ a.e. in
$B(x_0,1)$. Letting $n \to \infty$ in \eqref{eq:47}, the argument above and \cite[Theorem 3.13]{brezisFunctionalAnalysisSobolev2011} yield: 
\begin{align*}
 \max \left\{
  \norm{\varphi_0}_{L^{\infty}(B(x_0,1))},
  \norm{f_0}_{L^{\infty}( {B}(x_0,1))}\right\}
  &\le 1,
\end{align*}
which, together with \eqref{eq:49}, implies that \eqref{eq:45} still holds for
$\phi = \varphi_0$ (observe that, from \eqref{eq:44}, shrinking the domain from $B(x_0,1)$ to $B(x_0,1/2)$ does not affect estimates), that is:
\begin{align}
\label{eq:51}
\sup_{\abs{x-x_0} < \theta} \abs{\varphi_0(x) - \varphi_0(x_0) - (x-x_0)\cdot
  \dashint_{B(x_0,\theta)}\nabla \varphi_0 (z) \di z} < \theta^{1+3\mu/4}.
\end{align}
On the other hand, letting $n\to \infty $ in \eqref{eq:48} and since $\norm{\vec{\omega}}_{L^{\infty}(Y)}<\infty$, we obtain: 
\begin{align*}
\sup_{\abs{x-x_0} < \theta} \abs{\varphi_0(x) - \varphi_0(x_0) - (x-x_0)
  \cdot\dashint_{B(x_0,\theta)}\nabla \varphi_0 (z) \di z} \ge \theta^{1+\mu/2},
\end{align*}
which contradicts \eqref{eq:51}, since $0 < \theta < \frac{1}{4}.$

  \item \emph{Iteration} \quad Let $0 < \varepsilon < \varepsilon^*$. Direct evaluation yields that:
\begin{align*}
  P^{\varepsilon} (x)
  \coloneqq &\frac{1}{\theta^{1+\mu/2}} \left\{
  \varphi^{\varepsilon}(\theta x) - \varphi^{\varepsilon}(\theta x_0)
  \vphantom{\dashint_{B(x_0,\theta)}}\right.\\
  &\qquad\left.
  - \left[ \theta (x - x_0) + \varepsilon \vec{\omega} \left( \frac{\theta
  x}{\varepsilon} \right)\right] \cdot \dashint_{B(x_0,\theta)}
  \nabla \varphi^{\varepsilon} (z) \di z
  \right\}
\end{align*}
solves the following equation:
\begin{align}
  \label{eq:53}
-\Div \left[ \bfa \left( \frac{\theta x}{\varepsilon} \right) \nabla
  P^{\varepsilon}(x) \right] = \tilde{f} \text{ in } B(x_0,1),
\end{align}
where $\tilde{f} \coloneqq \theta^{1-\mu/2}f(\theta x)$.  
%\textcolor{green}{[YG: next two eq's are not necessary for the paper but keep for the thesis/APR]$\Rightarrow$}Indeed, we see that: 
%\begin{align*}
%  \nabla P^{\varepsilon}(x)
%  &= \frac{1}{\theta^{1 + \mu/2}} \left\{
%    \theta \nabla \varphi^{\varepsilon} (\theta x) - \left[ \theta \II
%    + \theta \nabla \vec{\omega} \left( \frac{\theta x}{\varepsilon}
%    \right)\right]\cdot \dashint_{B(x_0,\theta)} \nabla
%    \varphi^{\varepsilon}(z) \di z\right\}\\
%  &= \frac{1}{\theta^{\mu/2}} \left\{
%    \nabla \varphi^{\varepsilon}(\theta x) - \left[
%    \II + \nabla \vec{\omega} \left( \frac{\theta x}{\varepsilon} \right)
%    \right] \cdot \dashint_{B(x_0,\theta)} \nabla \varphi^{\varepsilon}(z) \di z
%    \right\},
%\end{align*}
%so: 
%\begin{align*}
%-\Div \left[ \bfa \left( \frac{\theta x}{\varepsilon} \right) \nabla
%  P^{\varepsilon}(x) \right]
%  &= -\frac{1}{\theta^{\mu/2}} \Div \left\{
%    \bfa \left( \frac{\theta x}{\varepsilon} \right) \nabla
%    \varphi^{\varepsilon} (\theta x) - \bfa %\left( \frac{\theta
%    x}{\varepsilon} \right) \left[ \II + \nabla \vec{\omega} \left(
%    \frac{\theta x}{\varepsilon} \right) \right] \cdot
%    \dashint_{B(x_0,\theta)} \nabla \varphi^{\varepsilon}(z) \di z
%    \right\}\\
%  &=\theta^{1-\mu/2} f(\theta x);
%\end{align*}
%here we use the fact that $\vec{\omega}$  is solution of cell problems.
Moreover, by \eqref{eq:43} and \eqref{eq:50}, we have:
$$\norm{P^{\varepsilon}}_{L^{\infty}(B(x_0,1))} \le 1 \,\text{ and}\, 
\norm{\tilde{f}}_{L^{\infty}( {B}(x_0,1))} \le 1,$$ so by using
\eqref{eq:43} again, we obtain: 
\begin{align}
\label{eq:52}
  \sup_{\abs{x-x_0} < \theta}\abs{P^{\varepsilon}(x)-
  P^{\varepsilon}(x_0)- \left[  x -x_0 + \frac{\varepsilon}{\theta}
  \vec{\omega} \left(  \frac{\theta x}{\varepsilon} \right) \right] \cdot
  \dashint_{B(x_0,\theta)}\nabla
  P^{\varepsilon}} (z) \di z \le \theta^{1+\mu/2}.
\end{align}
%Observe that: \textcolor{green}{[YG: also redundant]}
%\begin{align*}
%  \theta^{1+ \mu/2}
%  &\left\{ 
%    P^{\varepsilon}(x) - P^{\varepsilon}(x_0) - \left[ x - x_0 +
%    \frac{\varepsilon}{\theta} \vec{\omega} \left(
%    \frac{\theta x}{\varepsilon} \right) \right] \cdot
%    \dashint_{B(x_0,\theta)} \nabla P^{\varepsilon} (z) \di z
%    \right\}\\
%  &= \varphi^{\varepsilon} (\theta x) - \varphi^{\varepsilon}(\theta
%    x_0) - \left[ \theta (x-x_0) + \varepsilon \vec{\omega} \left(
%    \frac{\theta x}{\varepsilon} \right) \right] \cdot
%    \dashint_{B(x_0,\theta)} \nabla \varphi^{\varepsilon} (z) \di z\\
%  &\quad {}-{} \left[ \theta^{1+\mu/2} (x-x_0) + \theta^{\mu/2}
%    \varepsilon \vec{\omega} \left( \frac{\theta x}{\varepsilon} \right)
%    \right] \cdot \dashint_{B(x_0,\theta)} \nabla P^{\varepsilon} (z) \di z\\
%  &= \varphi^{\varepsilon}(\theta x) - \varphi^{\varepsilon} (\theta
%    x_0) -
%    \theta (x-x_0)\cdot \left( \dashint_{B(x_0,\theta)}\nabla
%    \varphi^{\varepsilon} (z) \di z + \theta^{\mu/2}~
%    \dashint_{B(x_0,\theta)}\nabla P^{\varepsilon} (z) \di z\right) \\
%  &\quad{}-{}\varepsilon \vec{\omega} \left( \frac{\theta x}{\varepsilon}
%    \right)\cdot \left( \dashint_{B(x_0,\theta)}\nabla
%    \varphi^{\varepsilon}(z) \di z + \theta^{\mu/2}
%    \dashint_{B(x_0,\theta)}\nabla P^{\varepsilon}(z) \di z \right),
%\end{align*}
%so from \eqref{eq:52} and scaling down, 
From \eqref{eq:52} and scaling down, we have: \begin{align*}
  \sup_{\abs{x-x_0} < \theta^2}\abs{\varphi^{\varepsilon}(x)-
  \varphi^{\varepsilon}(x_0)- \left( x -x_0\right)\cdot
  a_2^{\varepsilon} + \varepsilon b_2^{\varepsilon}}\le \theta^{2(1+\mu/2)},
\end{align*}
where:
\begin{align}
\label{eq:59}
  \begin{split}
    a_2^{\varepsilon}
    &\coloneqq\dashint_{B(x_0,\theta)}\nabla
    \varphi^{\varepsilon} (z) \di z + \theta^{\mu/2}~
    \dashint_{B(x_0,\theta)}\nabla P^{\varepsilon}(z) \di z,\\
    b_2^{\varepsilon}(y)
    &\coloneqq \vec{\omega} \left( y
    \right) \cdot\left( \dashint_{B(x_0,\theta)}\nabla
    \varphi^{\varepsilon}(z) \di z + \theta^{\mu/2}
    \dashint_{B(x_0,\theta)}\nabla P^{\varepsilon}(z) \di z \right), \text{ for
  } y \coloneqq \frac{x}{\varepsilon} \in Y.
  \end{split}
\end{align}
By the De Giorgi-Nash-Moser estimate and Caccioppoli inequality \cite[Lemma
C.2]{armstrongQuantitativeStochasticHomogenization2019}, there exists
a constant $C$, depending only on $\lambda,\Lambda$ and $d$, such
that:
\begin{align*}
  \norm{\vec{\omega}}_{L^{\infty}(Y)}
  \le C,\qquad
  \abs{\dashint_{B(x_0,\theta)}\nabla \varphi^{\varepsilon}(z) \di z}
  \le C/\theta,\qquad
    \abs{\dashint_{B(x_0,\theta)}\nabla P^{\varepsilon} (z) \di z }
  \le C/\theta.
\end{align*}
Therefore, \eqref{eq:59} implies that:
\begin{align*}
  \abs{a_2^{\varepsilon}}
  &\le (C/\theta) \left( 1+\theta^{\mu/2} \right),\\
  \norm{b_2^{\varepsilon}}_{L^{\infty}(Y)}
  &\le (C/\theta) \left( 1+\theta^{\mu/2} \right).
\end{align*}
Reiterating this process, we obtain that there exists $C=C(\gamma,\lambda,\Lambda,d,\mu) > 0$ such that:
\begin{align}
  \label{eq:55}
  \begin{split}
  \abs{a_k^{\varepsilon}}
  &\le (C/\theta) \left(1+ \theta^{\mu/2}+ \cdots +
    \theta^{(k-1)\mu/2}\right),\\
  \norm{b_k^{\varepsilon}}_{L^{\infty}(Y)}
  &\le (C/\theta) \left(1+ \theta^{\mu/2}+ \cdots +
    \theta^{(k-1)\mu/2}\right),
  \end{split}
\end{align}
and: 
\begin{align}
\label{eq:54}
\sup_{\abs{x-x_0} < \theta^k}\abs{\varphi^{\varepsilon}(x)-
  \varphi^{\varepsilon}(x_0)- \left( x -x_0\right)\cdot
  a_k^{\varepsilon} + \varepsilon b_k^{\varepsilon}}\le \theta^{k(1+\mu/2)}.
\end{align}

\item \emph{Blow-up} \quad  Let $\varepsilon_0 \coloneqq \min \left\{\varepsilon^*,\frac{1}{5\sqrt{d}} \right\}$ and $0 < \varepsilon < \varepsilon_0.$

  Choose $k$ such that $\theta^{k+1} \le 4\varepsilon   \sqrt{d} <
  \theta^k$. Then, from \eqref{eq:54}, there exists $C = C(\theta,d) > 0$ so that: 
\begin{align}
\label{eq:56}
\sup_{\abs{x-x_0} < 4\varepsilon   \sqrt{d}}\abs{\varphi^{\varepsilon}(x)-
  \varphi^{\varepsilon}(x_0)- \left( x -x_0\right)\cdot
  a_k^{\varepsilon} + \varepsilon b_k^{\varepsilon}}
  \le C\varepsilon^{1+\mu/2}, 
\end{align}
which, together with \eqref{eq:55}, leads to: 
\begin{align}
\label{eq:57}
\norm{\varphi^{\varepsilon} -
  \varphi^{\varepsilon}(x_0)}_{L^{\infty}(B(x_0,4\varepsilon   \sqrt{d}))} \le C
  \varepsilon. 
\end{align}
Denote by $z_0^{\varepsilon}$ the center of the cell $Y^\varepsilon_i$ containing $x_0$, and define:  \begin{align*}
    v^{\varepsilon} (x)
    \coloneqq \frac{1}{\varepsilon} \left[ 
    \varphi^{\varepsilon}(\varepsilon x + z_0^\varepsilon ) - \varphi^\varepsilon (\varepsilon x_0 + z_0^\varepsilon )
    \right], \quad x \in \Omega.
\end{align*}
Then, $\nabla v^{\varepsilon} (x) = \nabla \varphi^{\varepsilon} \left(\varepsilon x + z_0^{\varepsilon}\right)$ and, moreover, $v^{\varepsilon}$ solves: 
\begin{align}
\label{eq:blow-up}
    -\Div \left[ \bfa \left( x + \frac{z_0^{\varepsilon}}{\varepsilon} \right) \nabla v^{\varepsilon}(x)\right] 
    = \varepsilon f (\varepsilon x + z_{0}^{\varepsilon}) \quad \text{ in } B \left(0, 3\sqrt{d} \right).
\end{align}
Observe that: 
\begin{align}
  \label{eq:8}
  \begin{split}
    \frac{1}{\varepsilon} \left( 
    B \left(x_{0},\varepsilon \sqrt{d}\right) - z_{0}^{\varepsilon}
    \right) 
    &\subset B \left(0, 2\sqrt{d} \right) \\
    &\subset B \left(0, 3\sqrt{d} \right) 
    \subset \frac{1}{\varepsilon} \left( 
    B \left(x_{0},4\varepsilon \sqrt{d}\right) - z_{0}^{\varepsilon}
  \right).
  \end{split}
\end{align}
Applying \cite[Theorem 1.1]{liGradientEstimatesSolutions2000} to \eqref{eq:blow-up}, we obtain that there exists a constant $C > 0$, independent of $\varepsilon$ and $x_0$, such that:
\begin{align}
\label{eq:7}
\begin{split}
    &\norm{\nabla v^{\varepsilon}}_{L^{\infty}\left( \frac{1}{\varepsilon} \left( 
    B \left(x_{0},\varepsilon \sqrt{d}\right) - z_{0}^{\varepsilon}
    \right) \right) }\\
    &\quad\le 
    \norm{\nabla v^{\varepsilon}}_{L^{\infty} \left(
    B \left(0, 2\sqrt{d} \right)
    \right)}\\
    &\quad\le 
    C \left[
    \norm{v^{\varepsilon}}_{L^{\infty}\left(
    B \left(0, 3\sqrt{d} \right)\right)}
    +\sup_{x \in B \left(0, 3\sqrt{d} \right)} \abs{\varepsilon f (\varepsilon x + z_{0}^{\varepsilon}) }
    \right]\\
    &\quad\le 
    C \left[
    \norm{v^{\varepsilon}}_{L^{\infty}\left( \frac{1}{\varepsilon} \left( 
    B \left(x_{0},4\varepsilon \sqrt{d}\right) - z_{0}^{\varepsilon}
    \right) \right) }
    + \norm{f}_{L^{\infty}\left( \frac{1}{\varepsilon} \left( 
    B \left(x_{0},4\varepsilon \sqrt{d}\right) - z_{0}^{\varepsilon}
    \right) \right) }
    \right]
\end{split}
\end{align}
Scaling down \eqref{eq:7} and using \eqref{eq:57}, we obtain:
\begin{align*}
    &\norm{\nabla \varphi^{\varepsilon}}_{L^{\infty} \left(
    B \left(x_0, \varepsilon \sqrt{d} \right)
    \right)}\\
    &\le 
    C \left[ \frac{1}{\varepsilon}
    \norm{\varphi^{\varepsilon} - \varphi^{\varepsilon} (x_0)}_{L^{\infty}\left( \left( 
    B \left(x_{0},4\varepsilon \sqrt{d}\right)
    \right) \right) }
    + \norm{f}_{L^{\infty}\left( 
    B \left(x_{0},4\varepsilon \sqrt{d}\right) \right) }
    \right]
    \le C,
\end{align*}
where $C > 0$ is independent of $x_0$ and $\varepsilon$.

\end{enumerate}
\end{proof}

\begin{remark}
\label{sec:stat-disc-main}
 Under stronger smoothness assumptions on the coefficient $\bfa$,
similar estimates to \eqref{eq:6} are proved in the literature. In particular, if $\bfa$
is in VMO$(\RR^d)$, the real-variable method of L. Caffarelli and
I. Peral \cite{caffarelliW1PestimatesElliptic1998} yields an uniform
$W^{1,p}-$estimate; on the other hand, if $\bfa$ is H\"{o}lder
continuous, then one has the uniform Lipschitz estimate. Those results
hold also for elliptic systems, and even for Neumann boundary
condition. We refer the reader to
\cite{shenPeriodicHomogenizationElliptic2018,shenBoundaryEstimatesElliptic2017,shenW1EstimatesElliptic2008,kenigHomogenizationEllipticSystems2013,liEstimatesEllipticSystems2003,avellanedaCompactnessMethodsTheory1989,avellanedaCompactnessMethodsTheory1987,prangeUniformEstimatesHomogenization2014}
and references cited therein.

However, in this paper, we focus on the case when $\bfa$ is only piecewise H\"{o}lder
continuous. A similar argument as in the papers cited above, together with the regularity theorem of Li and Vogelius \cite[Theorem
1.1]{liGradientEstimatesSolutions2000} yield the interior Lipschitz estimate as showed in
\cref{sec:lipschitz-estimate}. Moreover, some additional care is needed
to ensure that the constant $C$ in \eqref{eq:7} is independent of \emph{both} $\varepsilon$ and $x_0$. 
%It is a common practice that, in the Blow-up step
%of the proof above, one lets
In the Blow-up step of the proof above, one may try to let
\begin{align}
\label{eq:10}
s^{\varepsilon}(x) \coloneqq \frac{1}{\varepsilon}
\varphi^{\varepsilon}(\varepsilon x + x_0)
\end{align}
so that
\begin{align}
\label{eq:9}
-\Div
\left[ \bfa \left( x \right) \nabla
    s^{\varepsilon}(x) \right] = \varepsilon f(\varepsilon x + x_0),
\end{align}
then by applying \cite[Theorem 1.1]{liGradientEstimatesSolutions2000}, one obtains
$$
\norm{\nabla s^{\varepsilon}}_{L^{\infty} \left( B \left( 0,
      \frac{1}{2\varepsilon_0} \right) \right)} \le C' \left[
  \norm{s^{\varepsilon}}_{L^{\infty} \left( B \left( 0,
        \frac{1}{\varepsilon_0} \right) \right)} +
  \norm{f}_{L^{\infty} \left( B \left( 0, \frac{1}{\varepsilon_0}
      \right) \right)} \right].
$$
%and \emph{claim $C'$ is independent of both $\varepsilon$ and $x_0$.}
However, \emph{$C'$ indeed depends on $x_0$}. The reason is that, when one
shifts $x_0$ in the scaling \eqref{eq:10}, one
also changes the $C^{1,\alpha}$-modulus of the subdomains, where
the latter, in the context of our problem, are generated by taking
intersections of the ball centered at $x_0$ and the heterogenous
domain.  In short, we do not have the uniform control of the
subdomains when using the scaling \eqref{eq:10} for arbitrary $x_0$, see
\cref{fig:2}. In order to circumvent the dependence on $x_0$, we use a different scaling and combine with a geometric argument, as demonstrated in the proof of \cref{sec:lipschitz-estimate}.

\begin{figure}%[b]
  \sidecaption
%\centering
%\def\svgwidth{0.5\columnwidth}
%\import{./figures/}{oneway-paper.pdf_tex}
\includegraphics[scale=.450]{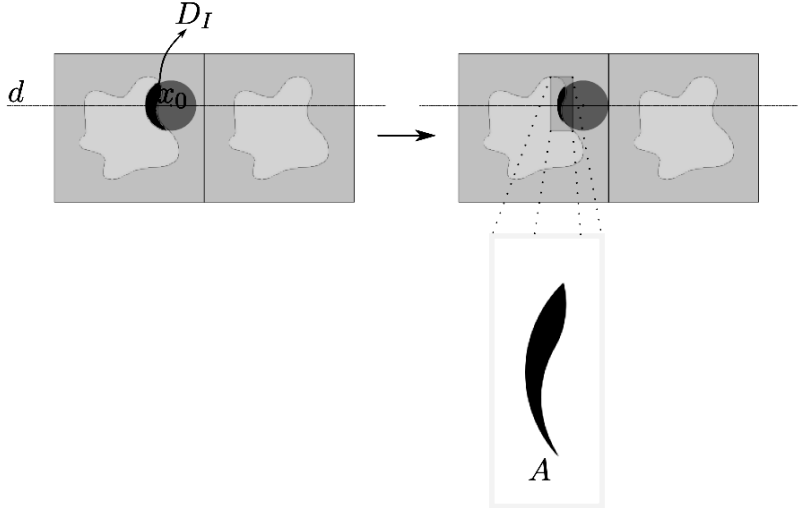}
\caption{As the center $x_0$ of the ball $B \left( x_0,
    \frac{1}{\varepsilon_0} \right)$ slides on the line $d$ to the right,
  the subdomain $D_I$ shrinks to 0, which makes the $C^{1,\alpha}$
  modulus to become unbounded \cite[page 93]{liGradientEstimatesSolutions2000}. 
  Moreover, in some cases, it is possible that a cusp also appears at some points (point $A$ on the zoomed in figure above).
  }
\label{fig:2}
\end{figure}

\end{remark}

The following result follows from \cref{sec:lipschitz-estimate}, the De Giorgi-Nash-Moser
estimate and a change of variable.
\begin{proposition}[Interior Lipschitz Estimate II]
\label{sec:lipschitz-estimate-5}
Suppose that $\bfa \in \fraM_{\per}(\lambda,\Lambda)$ and
$f \in L^{\infty}(\Omega)$. Fix
$x_0\in\Omega$, $R > 0$ so that $B(x_0,R)\subset \Omega$. There exist
$\varepsilon_0=\varepsilon_0 (\lambda,\Lambda, d)> 0$ and $C=C(\lambda,\Lambda, d) > 0$ such that, for all
$0 < \varepsilon < \varepsilon_0$, the
weak solution $\varphi^{\varepsilon} \in H^1(B(x_0,R))$ of the
equation
$-\Div \left[ \bfa \left( \frac{x}{\varepsilon} \right)\nabla
  \varphi^{\varepsilon} \right] = f$ in $B(x_0,R)$ satisfies:
\begin{align}
\label{eq:65}
  \abs{\nabla \varphi^{\varepsilon}(x_0)}
  \le C' \left[
  \left( \dashint_{B(x_0,R)} \abs{\nabla \varphi^{\varepsilon}(z)}^2  \di z
  \right)^{\frac{1}{2}} + R \sup_{x \in B(x_0,R)} \abs{f(x)}
  \right].
\end{align}
\end{proposition}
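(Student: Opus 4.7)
The plan is to deduce the estimate from \cref{sec:lipschitz-estimate} by a rescaling that brings us to the unit ball, and then replace the $L^\infty$-norm of the solution appearing on the right-hand side by an $L^2$-norm of its gradient via a De Giorgi-Nash-Moser bound together with Poincar\'e's inequality.

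First I would remove the additive constant. Set
$u^\varepsilon(x) := \varphi^\varepsilon(x) - \dashint_{B(x_0,R)}\varphi^\varepsilon(z)\di z$,
which solves the same equation on $B(x_0,R)$ and has zero mean there. Next, I rescale by $\psi(y) := u^\varepsilon\bigl(x_0 + \tfrac{R}{2}y\bigr)$ for $y\in B(0,1)$. A direct change of variables shows that $\psi$ solves
$$-\Div_y\!\left[\bfa^*\!\left(\tfrac{y}{\tilde\varepsilon}\right)\nabla_y\psi\right]=\tilde f(y)\quad\text{in }B(0,1),$$
where $\tilde\varepsilon := 2\varepsilon/R$, $\tilde f(y):=(R/2)^2 f(x_0 + (R/2)y)$ and $\bfa^*(z):=\bfa(z + x_0/\varepsilon)$. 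The key observation at this stage is that, although $x_0/\varepsilon$ need not be an integer vector, the shifted matrix $\bfa^*$ is still $Y$-periodic and still belongs to $\fraM_\per(\lambda,\Lambda)$, so \cref{sec:lipschitz-estimate} applies uniformly in $x_0$. Choosing $\varepsilon_0$ so that $\tilde\varepsilon<\varepsilon_0'(\lambda,\Lambda,d)$ (the threshold from \cref{sec:lipschitz-estimate} with unit radius), that proposition gives
$$\|\nabla_y\psi\|_{L^\infty(B(0,1/2))}\le C'\bigl(\|\psi\|_{L^\infty(B(0,1))}+\|\tilde f\|_{L^\infty(B(0,1))}\bigr).$$
Scaling back and evaluating at $y=0$ yields
$$R\,|\nabla\varphi^\varepsilon(x_0)|\le 2C'\Bigl(\|u^\varepsilon\|_{L^\infty(B(x_0,R/2))} + (R/2)^2\|f\|_{L^\infty(B(x_0,R))}\Bigr).$$

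It remains to estimate $\|u^\varepsilon\|_{L^\infty(B(x_0,R/2))}$. For this I apply the De Giorgi-Nash-Moser $L^\infty$-bound to $u^\varepsilon$, which (up to tracking powers of $R$) gives
$$\|u^\varepsilon\|_{L^\infty(B(x_0,R/2))}\le C\bigl(R^{-d/2}\|u^\varepsilon\|_{L^2(B(x_0,R))} + R^2\|f\|_{L^\infty(B(x_0,R))}\bigr).$$
Because $u^\varepsilon$ has zero mean on $B(x_0,R)$, the Poincar\'e inequality yields $\|u^\varepsilon\|_{L^2(B(x_0,R))}\le CR\|\nabla\varphi^\varepsilon\|_{L^2(B(x_0,R))}$, so $R^{-d/2}\|u^\varepsilon\|_{L^2(B(x_0,R))}\le CR\bigl(\dashint_{B(x_0,R)}|\nabla\varphi^\varepsilon|^2\bigr)^{1/2}$. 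Substituting into the previous display and absorbing the resulting factor of $R$ against the $R$ on the left, we obtain exactly \eqref{eq:65}.

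There is no real obstacle in this argument; the work is routine once \cref{sec:lipschitz-estimate} is available. The only point that requires mild care is the bookkeeping of powers of $R$ through the rescaling and the Poincar\'e/Moser steps, together with the verification that the shifted coefficient $\bfa^*$ still lies in $\fraM_\per(\lambda,\Lambda)$ so that the interior Lipschitz estimate can be invoked uniformly in the base point $x_0$.
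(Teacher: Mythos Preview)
Your proposal is correct and follows essentially the same approach as the paper: subtract the mean, reduce to the unit ball, apply \cref{sec:lipschitz-estimate}, then use the De Giorgi--Nash--Moser estimate together with Poincar\'e's inequality to replace the $L^\infty$-norm of the solution by the averaged $L^2$-norm of its gradient. The only cosmetic difference is that the paper first treats the case $R=1$ and afterwards rescales to general $R$ by changing the period of the coefficient (noting that the proof of \cref{sec:lipschitz-estimate} does not depend on the period), whereas you rescale immediately and absorb the base point into a shifted but still $Y$-periodic coefficient $\bfa^*$; the two viewpoints are equivalent.
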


\begin{proof}
  Without loss of generality, we assume $x_0 = 0$. By
  \cref{sec:lipschitz-estimate}, with $R = 1$ and considering $\varphi^{\varepsilon} - \dashint_{B(0,1)}
  \varphi^{\varepsilon} (z) \di z $, which solves $-\Div \left[ \bfa \left( \frac{x}{\varepsilon} \right)\nabla
    \varphi^{\varepsilon} \right] = f$ in $B(0,1)$, we have that there exist
  $\varepsilon_0 > 0$ and $C' > 0$, depending only on $\lambda,\Lambda,d$, such that:
\begin{align}
  \label{eq:66}
  \begin{split}
  \abs{\nabla \varphi^{\varepsilon}(0)}
  &\le \norm{\nabla \varphi^{\varepsilon}}_{L^{\infty}(B(0,1/4))}\\
  &\le C' \left( \norm{\varphi^{\varepsilon} - \dashint_{B(0,1)}
  \varphi^{\varepsilon}(z) \di z}_{L^{\infty}(B(0,1/2))} +
\norm{f}_{L^{\infty}(B(0,1/2))} \right)\\
&\le C' \left(
  \norm{\varphi^{\varepsilon} - \dashint_{B(0,1)}
  \varphi^{\varepsilon}(z) \di z}_{L^{2}(B(0,1))} +
\norm{f}_{L^{\infty}(B(0,1))}
\right)\\
&\le C' \left(
  \norm{\nabla \varphi^{\varepsilon}}_{L^2(B(0,1))} + \norm{f}_{L^{\infty}(B(0,1))}
\right)\\
&\le C' \left[
  \left( \dashint_{B(0,1)} \abs{\nabla \varphi^{\varepsilon}(z)}^2  \di z
  \right)^{\frac{1}{2}} +  \sup_{x \in B(0,1)} \abs{f(x)}
  \right],
\end{split}
\end{align}
where we have used the De Giorgi-Nash-Moser estimate and Poincar\'{e}'s 
inequality.

For $R > 0$ and $x \in B(0,1)$, let $v^{\varepsilon}(x)\coloneqq R^{-1}
\varphi^{\varepsilon}(Rx),$ then $\nabla v^{\varepsilon}(x) = \nabla
\varphi^{\varepsilon}(Rx)$ and: 
\begin{align*}
-\Div \left[ \bfb \left( \frac{x}{\varepsilon} \right) \nabla
  v^{\varepsilon}(x) \right] = R f(Rx), 
\end{align*}
where $\bfb (z) \coloneqq \bfa (Rz).$ We have $\bfb \in \fraM
(\lambda,\Lambda)$ is $R^{-1}Y-$periodic. Note that the proof of
\cref{sec:lipschitz-estimate} does not depend on the period, hence, \eqref{eq:66} holds for $v^{\varepsilon}$ in
particular: 
\begin{align}
  \label{eq:67}
  \begin{split}
\abs{\nabla \varphi^{\varepsilon}(0)} &= \abs{\nabla v^{\varepsilon}(0)}\\
  &\le C' \left[
  \left( \dashint_{B(0,1)} \abs{\nabla v^{\varepsilon} (x)}^2  \di x
  \right)^{\frac{1}{2}} +  R \sup_{x \in B(0,1)} \abs{f(Rx)}
\right]\\
&= C' \left[
  \left( \dashint_{B(0,1)} \abs{\nabla \varphi^{\varepsilon}(Rx)}^2  \di x
  \right)^{\frac{1}{2}} +  R \sup_{x \in B(0,1)} \abs{f(Rx)}
\right].
\end{split}
\end{align}
By a change of variable in \eqref{eq:67}, we obtain \eqref{eq:65}.
\end{proof}

We recall the interior H\"{o}lder estimate, adapted from
\cite[Proposition 1]{prangeUniformEstimatesHomogenization2014} (or \cite[Lemma 9]{avellanedaCompactnessMethodsTheory1987}) that will be used to obtain the boundary H\"{o}lder estimate in
the next section. 

\begin{proposition}[Interior H\"{o}lder Estimate] 
\label{sec:lipschitz-estimate-3}
Suppose that $\bfa \in \fraM_{\per}(\lambda,\Lambda)$ and
$f \in L^{d + \gamma}(\Omega)$, for some $ \gamma >0$. Fix
$x_0\in\Omega$, $R > 0$ such that $B(x_0,R)\subset \Omega$. Let 
$0 < \mu \coloneqq \frac{\gamma}{d+\gamma} < 1$. There exists
 $C = C(\gamma,\lambda,\Lambda, p, R, d)> 0$ such that, for all
$\varepsilon > 0$, the
weak solution $\varphi^{\varepsilon} \in H^1(B(x_0,R))$ of the equation
$-\Div \left[ \bfa \left( \frac{x}{\varepsilon} \right)\nabla
  \varphi^{\varepsilon} \right] = f$ in $B(x_0,R)$ satisfies:
\begin{align}
\label{eq:41}
  \left[ \varphi^{\varepsilon}\right]_{C^{0,\mu}(B(x_0,R/2))}
  \le C  \left( \norm{\varphi^{\varepsilon}}_{L^2(B(x_0,R))} + \norm{f}_{L^{d+\gamma}(B(x_0,R))} \right),
\end{align}
where $\left[ h\right]_{C^{0,\mu} (A)} 
\coloneqq \sup_{x \ne y \in A} \frac{\abs{h(x)-h(y)}}{\abs{x-y}^\mu}$. 
%\begin{align*}
%\left[ h\right]_{C^{0,\mu} (A)} 
%\coloneqq \sup_{x \ne y \in A} \frac{\abs{h(x)-h(y)}}{\abs{x-y}^\mu}.
%\end{align*}
\end{proposition}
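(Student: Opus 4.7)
The plan is to run the Avellaneda--Lin compactness scheme in parallel with \cref{sec:lipschitz-estimate}, but aiming at $C^{0,\mu}$ decay instead of a Lipschitz bound. By dilation, we reduce to $R=1$ and, by linearity, normalize so that $\norm{\varphi^\varepsilon}_{L^2(B(x_0,1))}+\norm{f}_{L^{d+\gamma}(B(x_0,1))}\le 1$. The proof will consist of three steps: (i) an improvement lemma, (ii) an iteration by rescaling, and (iii) a blow-up at sub-$\varepsilon$ scales combined with a Campanato-type characterization of H\"older continuity to recover \eqref{eq:41}.

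For the improvement step we claim that there exist $\theta\in(0,1/4)$ and $\varepsilon^\ast>0$, depending only on $\lambda,\Lambda,d,\gamma$, such that every normalized $\varphi^\varepsilon$ with $\varepsilon<\varepsilon^\ast$ admits a constant $\eta$ with $\sup_{B(x_0,\theta)}\abs{\varphi^\varepsilon-\eta}\le\theta^\mu$. Assume the opposite and extract sequences $\bfa_n\in\fraM_{\per}(\lambda,\Lambda)$, $f_n$, $\varepsilon_n\to 0$, $\varphi_n$ violating this bound. Exactly as in the proof of \cref{sec:lipschitz-estimate}, the homogenization machinery produces a limit $\varphi_0$ solving a constant-coefficient equation $-\Div(\bfa_0\nabla\varphi_0)=f_0$ with $\bfa_0\in\fraM(\lambda,\Lambda)$ and $f_0\in L^{d+\gamma}$; the De Giorgi--Nash--Moser bound combined with Arzel\`a--Ascoli upgrades weak convergence to uniform convergence on $\overline{B(x_0,1/2)}$. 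By the classical Schauder estimate (\cref{sec:an-appendix-1}), together with the Morrey embedding of $W^{2,d+\gamma}$ into $C^{1,\mu}$, $\varphi_0\in C^{1,\mu}$ near $x_0$, so $\sup_{B(x_0,\theta)}\abs{\varphi_0-\varphi_0(x_0)}\le C\theta$. Since $\mu<1$, we choose $\theta$ so small that $C\theta\le\tfrac{1}{2}\theta^\mu$, contradicting the negation for $n$ large.

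For the iteration, set $P^\varepsilon(x):=\theta^{-\mu}[\varphi^\varepsilon(\theta(x-x_0)+x_0)-\eta]$, which solves an equation of the same form with coefficient in $\fraM_{\per}(\lambda,\Lambda)$ (of period $\varepsilon/\theta$) and source $\tilde f(x):=\theta^{2-\mu}f(\theta(x-x_0)+x_0)$. A change of variable, using the key identity $(2-\mu)(d+\gamma)-d=d+\gamma>0$, yields $\norm{\tilde f}_{L^{d+\gamma}(B(x_0,1))}\le 1$, while the improvement bound gives $\norm{P^\varepsilon}_{L^2(B(x_0,1))}\le 1$. Hence the improvement lemma applies again so long as $\varepsilon/\theta<\varepsilon^\ast$. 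Iterating $k$ times produces constants $\eta_k$ with $\sup_{B(x_0,\theta^k)}\abs{\varphi^\varepsilon-\eta_k}\le\theta^{k\mu}$, valid for $k$ satisfying $\varepsilon<\varepsilon^\ast\theta^{k-1}$; the $\eta_k$ form a Cauchy sequence with $\abs{\eta_k-\eta_{k-1}}\le 2\theta^{(k-1)\mu}$.

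For $r$ smaller than $\varepsilon/\varepsilon^\ast$ the iteration halts, and we close the argument via the blow-up $v(x):=\varepsilon^{-\mu}[\varphi^\varepsilon(\varepsilon x+x_0)-\eta_{k_\ast}]$, where $k_\ast$ is chosen so that $\theta^{k_\ast}\sim\varepsilon$; this $v$ is bounded in $L^\infty$ on a unit ball and satisfies a uniformly elliptic equation with coefficient independent of any small parameter, so the De Giorgi--Nash--Moser estimate applied to $v$ yields the remaining dyadic oscillation bounds at sub-$\varepsilon$ scales. Assembling both regimes over all centers in $B(x_0,R/2)$ yields the Campanato-type decay $\dashint_{B(x,r)}\abs{\varphi^\varepsilon-(\varphi^\varepsilon)_{x,r}}^2\le C r^{2\mu}$ for all sufficiently small $r$, and Campanato's classical characterization converts this into the $C^{0,\mu}$ estimate \eqref{eq:41}. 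The main obstacle is the sub-$\varepsilon$ regime: the De Giorgi--Nash--Moser exponent is intrinsic to $\lambda,\Lambda,d$ and has a priori no relation to $\mu$, so reconciling the two scales relies on the $O(\varepsilon^\mu)$ oscillation of $\varphi^\varepsilon$ already produced by the iteration, rather than on the raw H\"older exponent at the blow-up level.
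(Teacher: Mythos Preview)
Your approach coincides with the paper's: the paper does not give a detailed proof but simply refers to Prange's Proposition~1 (equivalently Avellaneda--Lin, Lemma~9), noting that in the scalar case the De Giorgi--Nash--Moser theorem replaces the Schauder estimate wherever H\"older continuity of $\bfa$ would otherwise be required. Your improvement/iteration/blow-up scheme, with DGNM supplying both the uniform convergence in the compactness step and the small-scale regularity in the blow-up, is precisely that argument.

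One point you correctly flag as the ``main obstacle'' but do not fully close: in the sub-$\varepsilon$ blow-up, DGNM yields only $[v]_{C^{0,\beta}}\le C$ with $\beta=\beta(\lambda,\Lambda,d)$, so scaling back gives $\abs{\varphi^\varepsilon(x)-\varphi^\varepsilon(y)}\le C\varepsilon^{\mu-\beta}\abs{x-y}^\beta$ for $\abs{x-y}\le c\varepsilon$, and this dominates $\abs{x-y}^\mu$ only when $\mu\le\beta$. Since $\mu=\gamma/(d+\gamma)$ can be taken close to $1$ while $\beta$ is a fixed small number, your reconciliation (``relies on the $O(\varepsilon^\mu)$ oscillation'') does not by itself cover the full stated range of $\mu$. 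The paper's one-paragraph sketch is equally silent on this, and for every downstream use in the paper (boundary H\"older estimate, Green's function bounds, Dirichlet corrector) any fixed positive exponent suffices, so the harmless remedy is to work with $\min\{\mu,\beta\}$ in place of $\mu$.
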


The proof of \cref{sec:lipschitz-estimate-3} is similar to \cite[Proposition 1]{prangeUniformEstimatesHomogenization2014}. Indeed, a closer look at the proof of \cite[Proposition
1]{prangeUniformEstimatesHomogenization2014} reveals that the
H\"{o}lder continuity assumption on $\bfa$ is needed only for the classical Schauder estimate for elliptic systems to hold. However, this paper is devoted to the scalar case, and we use the De Giorgi-Nash-Moser Theorem \cite[Theorem
8.24]{gilbargEllipticPartialDifferential2001}, for which the assumption $\bfa$ is bounded is sufficient.

\begin{remark}
For the case of elliptic systems, the De Giorgi-Nash-Moser
Theorem does not hold, see counterexamples by De Giorgi, Giusti and Miranda, and others, c.f. \cite[Section
9.1]{giaquintaIntroductionRegularityTheory2012}, and references cited therein. Because of that, this paper is concerned with the scalar case only.
\end{remark}

\section{Boundary estimates, Green functions, Dirichlet correctors and  proof of main theorem}
\label{sec:bound-hold-estim}

The following result is adapted from \cite[Section
2.3]{avellanedaCompactnessMethodsTheory1987} and \cite[Section 5.2]{shenPeriodicHomogenizationElliptic2018}.
\begin{proposition}[Boundary H\"{o}lder Estimate]
\label{sec:bound-hold-estim-1}
Suppose that $\bfa \in \fraM_{\per}(\lambda,\Lambda)$,
% $f \in L^{d + \gamma}(\Omega)$ for some $0 < \gamma < 1$,
and $\Omega$ is a $C^1$-domain. Fix $x_0\in \partial\Omega$,
$0 < r < \diam (\Omega)$ and
$0 < \mu  <1$. Let
$g \in C^{0,1}\left( B(x_0,r) \cap \partial \Omega \right).$ 
There exist $\varepsilon_0 =\varepsilon_0 (\mu,\lambda,\Lambda,d) > 0$ and $C=c(\mu,\lambda,\Lambda,d) > 0$ such that, for all $0 < \varepsilon < \varepsilon_0$, every
weak solution $\varphi^{\varepsilon} \in H^1(B(x_0,r))$ of the equation:
\begin{align*}
-\Div \left[ \bfa \left( \frac{x}{\varepsilon} \right)\nabla
  \varphi^{\varepsilon} \right]
  &= 0 \text{ in } B(x_0,r) \cap \Omega, \\
  \varphi^{\varepsilon}
  &= g \text{ on } B(x_0,r) \cap \partial \Omega,
\end{align*}
satisfies:
\begin{align}
  \label{eq:68}
  \begin{split}
  &\left[ \varphi^{\varepsilon}\right]_{C^{0,\mu}(B(x_0,r/2) \cap \Omega)}\\
  &\quad\le C r^{-\mu} \left[
  \left( \dashint_{B(x_0,r) \cap \Omega} \abs{\varphi^{\varepsilon} (z)}^2  \di z
  \right)^{\frac{1}{2}} + \abs{g(x_0)} + r
  \norm{g}_{C^{0,1}\left(B(x_0,r)\cap \partial \Omega\right)}
\right].
\end{split}
\end{align}

\end{proposition}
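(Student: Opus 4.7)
The plan is to mirror the three-step compactness method (improvement, iteration, blow-up) used in \cref{sec:lipschitz-estimate}, now adapted to the boundary. By translation and dilation we reduce to $x_0 = 0$ and $r = 1$; subtracting the constant $g(0)$ (which accounts for the $\abs{g(x_0)}$ term on the right-hand side of \eqref{eq:68}) we may further assume $g(0) = 0$. Since $\partial\Omega$ is $C^1$, we flatten the boundary near $0$ by a local $C^1$ diffeomorphism, turning the equation into a scalar divergence-form equation on a half-ball with a bounded, uniformly elliptic coefficient whose oscillating structure $H$-converges as $\varepsilon \to 0$. The argument below uses only qualitative convergence of the coefficient, so the loss of exact periodicity after flattening is harmless.

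The core step is the improvement: for every $\mu \in (0,1)$ there exist $\theta \in (0,1/4)$ and $\varepsilon^* > 0$, depending on $\mu, \lambda, \Lambda, d$ and the $C^1$-character of $\partial\Omega$, such that if $\norm{\varphi^\varepsilon}_{L^2(B(0,1)\cap\Omega)} \le 1$, $\norm{g}_{C^{0,1}} \le 1$, $g(0) = 0$, and $0 < \varepsilon < \varepsilon^*$, then $(\dashint_{B(0,\theta)\cap\Omega}\abs{\varphi^\varepsilon}^2)^{1/2} \le \theta^\mu$. The proof is by contradiction: failure yields sequences $\varepsilon_n \to 0$, $\bfa_n \in \fraM_{\per}(\lambda,\Lambda)$, $\varphi_n$, $g_n$ violating the decay. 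Caccioppoli plus Rellich-Kondrachov gives $H^1_{\loc}$ weak compactness, and the boundary De Giorgi-Nash-Moser estimate for the scalar Dirichlet problem gives equicontinuity up to $\partial\Omega$. Along a subsequence $\varphi_n \to \varphi_0$ uniformly and, by $H$-convergence (\cite[Theorem 2.3.2]{shenPeriodicHomogenizationElliptic2018}), $\varphi_0$ solves $-\Div(\bfa_0 \nabla \varphi_0) = 0$ in $B(0,1)\cap\Omega$ with $\varphi_0 = g_0$ on $B(0,1)\cap\partial\Omega$, $g_0(0) = 0$, $\norm{g_0}_{C^{0,1}} \le 1$, and $\bfa_0 \in \fraM(\lambda,\Lambda)$ constant. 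Classical boundary regularity for constant-coefficient elliptic equations in a $C^1$ domain yields $\varphi_0 \in C^{0,\mu'}$ near $0$ for every $\mu' \in (\mu,1)$, so $\abs{\varphi_0(x)} \le C\abs{x}^{\mu'}$ there. Choosing $\theta$ small enough that $C\theta^{\mu'-\mu} < 1/2$ contradicts the violated estimate for all $n$ large.

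For the iteration, set $P^\varepsilon(x) \coloneqq \theta^{-\mu} \varphi^\varepsilon(\theta x)$ (after flattening, so the half-ball rescales to itself). Then $P^\varepsilon$ satisfies the analogous equation with coefficient $\bfa(\theta \cdot /\varepsilon)$ and boundary datum of $C^{0,1}$-norm at most $\theta^{1-\mu} \le 1$; the improvement provides the $L^2$ normalization of $P^\varepsilon$, so reiterating gives the geometric decay $(\dashint_{B(0,\theta^k)\cap\Omega} \abs{\varphi^\varepsilon}^2)^{1/2} \le \theta^{k\mu}$ for every $k$ with $\theta^{k-1} \ge \varepsilon/\varepsilon^*$, i.e.\ $(\dashint_{B(0,\rho)\cap\Omega}\abs{\varphi^\varepsilon}^2)^{1/2} \le C\rho^\mu$ uniformly for $\rho \ge \varepsilon/\varepsilon^*$. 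The blow-up step handles the remaining scales $\rho \lesssim \varepsilon$: rescaling by $\varepsilon$ and applying \cref{sec:lipschitz-estimate-3} (together with a Lipschitz extension of $g$ that absorbs the Dirichlet datum into a modified source with controlled $L^\infty$ norm) produces the Hölder control at the microscopic scale. A standard two-point comparison combining the two regimes, followed by undoing the reductions $r \to 1$, $g \to g - g(x_0)$, $x_0 \to 0$, yields \eqref{eq:68}.

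The main obstacle lies in the improvement step: one must ensure that the limit $\varphi_0$ truly attains the trace $g_0$ at the flattened boundary with $g_0(0)=0$, and enjoys boundary Hölder regularity of arbitrarily high exponent $\mu' < 1$. For scalar equations in $C^1$ domains, both facts follow from the boundary De Giorgi-Nash-Moser estimate together with classical theory for constant-coefficient equations; for systems the corresponding equicontinuity up to the boundary is not available in general, which is precisely why the whole argument is confined to the scalar case. Crucially, no continuity hypothesis is placed on $\bfa$ anywhere in the compactness argument -- only measurability, ellipticity, and periodicity are used -- which is exactly what enables \cref{sec:gradient-estimate-1} to tolerate piecewise $C^\alpha$ coefficients.
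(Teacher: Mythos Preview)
Your approach is essentially the same as the paper's: both follow the compactness method of Avellaneda--Lin as presented in \cite[Theorem 5.2.1]{shenPeriodicHomogenizationElliptic2018}, and the paper's own ``proof'' consists precisely of pointing to that reference and noting that the VMO hypothesis there enters only through (i) the case $\varepsilon \ge \varepsilon_0$ (which the proposition excludes) and (ii) the interior H\"older estimate, already relaxed in \cref{sec:lipschitz-estimate-3}. Your sketch of improvement--iteration--blow-up, with boundary De Giorgi--Nash--Moser providing equicontinuity in the contradiction step, is exactly this argument spelled out.

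Two small points of caution. First, if you flatten the boundary before running the compactness argument, the limiting coefficient after $H$-convergence is $J(x)^{\top}\calA\, J(x)/\abs{\det J(x)}$ with $J = \nabla\Phi$, which is \emph{continuous} (since $\Phi$ is $C^1$) but not constant as you wrote; this does not matter, since continuous coefficients on a half-ball still give the needed boundary H\"older regularity for $\varphi_0$. Shen instead works directly on $B(x_0,r)\cap\Omega$ without flattening, at the price of stating the improvement lemma uniformly over a $C^1$-compact family of domains; either route is fine. Second, in the blow-up step you invoke \cref{sec:lipschitz-estimate-3}, which is an \emph{interior} estimate and does not by itself control points at distance $\lesssim \varepsilon$ from $\partial\Omega$. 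The standard remedy (and what Shen does) is to combine the boundary Campanato decay from the iteration with \cref{sec:lipschitz-estimate-3} applied on interior balls $B(x,\dist(x,\partial\Omega))$, together with the boundary De Giorgi--Nash--Moser bound at the microscopic scale; your Lipschitz-extension idea achieves the same thing but the reduction is to a boundary problem, not an interior one. These are presentational rather than substantive issues.
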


The proof of \cref{sec:bound-hold-estim-1} follows the proof of
\cite[Theorem 5.2.1]{shenPeriodicHomogenizationElliptic2018} with
minor modifications. In \cite[Theorem 5.2.1]{shenPeriodicHomogenizationElliptic2018}, the assumption that
$\bfa \in \mathrm{VMO} (\RR^d)$ is used only in two places: 1) when
$\varepsilon \ge \varepsilon_0$, which is beyond of the scope of this
particular theorem, and 2) to obtain the interior H\"{o}lder estimate,
which we already relaxed in \cref{sec:lipschitz-estimate-3}.

Thanks to \cref{sec:lipschitz-estimate-3} and
\cref{sec:bound-hold-estim-1}, we can drop the assumption that $\bfa
\in \mathrm{VMO} (\RR^d)$ of Theorem 5.4.1-2 and Lemma 5.4.5 in
\cite{shenPeriodicHomogenizationElliptic2018}.  The results are summarized in the following proposition.

\begin{proposition}[Green's functions]
\label{sec:bound-hold-estim-2}
Suppose that $\bfa \in \fraM_{\per}(\lambda,\Lambda )$ and $\Omega$ is a
$C^1$-domain. Fix $0 < \mu,\sigma,\sigma_1 < 1$ and let
$\delta (x) \coloneqq \dist (x,\partial \Omega).$ Then, there exist
$\varepsilon_0=\varepsilon_0(\mu,\lambda,\Lambda,d)> 0$ and $C=C(\lambda,\Lambda,\sigma,\sigma_1,\Omega) > 0$ such that, for all $0
< \varepsilon < \varepsilon_0$, the Green's functions
$G^{\varepsilon}(x,y)$ exist and satisfy the following: 
\begin{subequations}
  \label{eq:70}
\begin{align}
  \label{eq:71}
  &\abs{G^{\varepsilon}(x,y)}
  \le
    \begin{cases}
      C \frac{1}{\abs{x-y}^{d-2}} &\text{ if } d \ge 3,\\
      C \left[ 1 + \ln \left(  \frac{r_0}{\abs{x-y}}\right)
      \right] &\text{ if } d = 2.
    \end{cases}\\
  \label{eq:72}
  &\abs{G^{\varepsilon}(x,y)}
  \le
    \begin{cases}
      \frac{C\delta (x)^{\sigma}}{\abs{x - y}^{d-2+\sigma}} &\text{ if
      } \delta(x) < \frac{1}{2}\abs{x-y},\\
      \frac{C \delta(y)^{\sigma_1}}{\abs{x-y}^{d-2+\sigma_1}} &\text{
        if } \delta(y) < \frac{1}{2}\abs{x-y},\\
      \frac{C \delta(x)^{\sigma}
        \delta(y)^{\sigma_1}}{\abs{x-y}^{d-2+\sigma+\sigma_1}} &\text{
      if } \delta(x) < \frac{1}{2}\abs{x-y} \text{ or } \delta(y) <
    \frac{1}{2} \abs{x-y},
  \end{cases}\\
  &\int_{\Omega} \abs{\nabla_y G^{\varepsilon} (x,y)}
  \delta(y)^{\sigma-1} \di y
  \le C \delta(x)^{\sigma},
\end{align}
\end{subequations}
where $x,y \in \Omega, x \ne y$ and $r_0 \coloneqq \diam (\Omega).$

As a consequence, for $0<c<1$ and $g \in C^{0,1}(\Omega)$, there exists $C = C(\lambda,\Lambda,d) > 0$ such that, for any $x_0
\in \partial \Omega$, for any $\varepsilon$ satisfying $c \varepsilon \le \min\{c\varepsilon_0,r\} \le r < r_0 \coloneqq \diam
(\Omega)$, and for any
 solution $\varphi^{\varepsilon}$ of the Dirichlet problem
$-\Div \left[ \bfa \left( \frac{x}{\varepsilon} \right)\nabla
  \varphi^{\varepsilon} \right] = 0$ in $\Omega$,
$\varphi^{\varepsilon} = g$ on $\partial \Omega$, the following estimate holds:
\begin{align}
\label{eq:69}
\left( \dashint_{B(x_0,r)\cap \Omega} \abs{\nabla
  \varphi^{\varepsilon}}^2 \right)^{\frac{1}{2}}
  \le C \left[
  \norm{\nabla g}_{L^{\infty}(\Omega)} + \varepsilon^{-1} \norm{g}_{L^{\infty}(\Omega)}
  \right].
\end{align}
 
\end{proposition}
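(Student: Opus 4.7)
The plan is to adapt Theorem~5.4.1--2 and Lemma~5.4.5 of \cite{shenPeriodicHomogenizationElliptic2018}, whose arguments invoke the VMO hypothesis on $\bfa$ \emph{only} to secure interior and boundary H\"{o}lder estimates, both of which are now available under the weaker hypothesis $\bfa \in \fraM_{\per}(\lambda,\Lambda)$ via \cref{sec:lipschitz-estimate-3,sec:bound-hold-estim-1}. Existence of $G^{\varepsilon}(x,y)$ together with the classical size bound \eqref{eq:71} follows from the Gr\"{u}ter--Widman / Littman--Stampacchia--Weinberger construction: solve the equation with a mollified point source, combine the De Giorgi--Nash--Moser $L^{\infty}$-estimate with the Caccioppoli energy inequality to extract pointwise bounds, and pass to the limit. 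Since De Giorgi--Nash--Moser only requires $\bfa \in \fraM(\lambda,\Lambda)$, all constants are uniform in $\varepsilon$.

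For the improved decay estimates \eqref{eq:72}, fix $y \in \Omega$ and regard $G^{\varepsilon}(\cdot,y)$ as a solution of the adjoint equation on $\Omega \setminus \{y\}$ with zero Dirichlet data on $\partial \Omega$. When $\delta(x) < \frac{1}{2}\abs{x - y}$, pick the closest point $\bar x \in \partial \Omega$ to $x$ and apply the boundary H\"{o}lder estimate \eqref{eq:68} on $B(\bar x, r) \cap \Omega$ with $r \sim \abs{x-y}/4$, noting that the boundary datum vanishes. Since $G^{\varepsilon}(\bar x, y) = 0$, this yields $\abs{G^{\varepsilon}(x,y)} \le C (\delta(x)/r)^{\sigma} \sup_{B(\bar x, r) \cap \Omega} \abs{G^{\varepsilon}(\cdot,y)}$; feeding \eqref{eq:71} into the right-hand side produces the first case of \eqref{eq:72}. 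The second case follows by symmetry, since $\bfa^{T} \in \fraM_{\per}(\lambda,\Lambda)$ and the Green's function of the transposed operator is the transpose of $G^{\varepsilon}$. The mixed case is obtained by iterating the two one-sided arguments. The integral bound on $\nabla_y G^{\varepsilon}$ is then recovered by splitting $\Omega$ into dyadic annular shells around $\partial \Omega$, applying the interior Caccioppoli inequality (or \cref{sec:lipschitz-estimate-5}) on each shell, and summing the resulting geometric series against the weight $\delta(y)^{\sigma - 1}$.

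The consequence estimate \eqref{eq:69} reduces to a boundary Caccioppoli argument. Let $\tilde g$ be a Lipschitz extension of $g$ to $\overline{\Omega}$ with $\norm{\tilde g}_{L^{\infty}(\Omega)} + \norm{\nabla \tilde g}_{L^{\infty}(\Omega)} \le C(\norm{g}_{L^{\infty}(\partial \Omega)} + \norm{\nabla g}_{L^{\infty}(\partial \Omega)})$. Choose a cutoff $\eta \in C_c^{\infty}(B(x_0,2r))$ with $\eta \equiv 1$ on $B(x_0,r)$ and $\abs{\nabla \eta} \le C/r$, and test the weak formulation of the homogeneous equation against $\eta^{2}(\varphi^{\varepsilon} - \tilde g)$, which is admissible because it vanishes on $\partial(B(x_0,2r) \cap \Omega)$. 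Ellipticity and Young's inequality give
\begin{align*}
  \int_{B(x_0,r) \cap \Omega} \abs{\nabla \varphi^{\varepsilon}}^{2} \di x
  \le \frac{C}{r^{2}} \int_{B(x_0,2r) \cap \Omega} \abs{\varphi^{\varepsilon} - \tilde g}^{2} \di x
  + C \int_{B(x_0,2r) \cap \Omega} \abs{\nabla \tilde g}^{2} \di x.
\end{align*}
The maximum principle furnishes $\norm{\varphi^{\varepsilon} - \tilde g}_{L^{\infty}(\Omega)} \le C \norm{g}_{L^{\infty}(\partial \Omega)}$, and the hypothesis $r \ge c \varepsilon$ converts $1/r$ into a multiple of $1/\varepsilon$. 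Dividing through by $\abs{B(x_0,r)}$ and taking square roots delivers \eqref{eq:69}.

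The chief obstacle is bookkeeping: every constant in the Gr\"{u}ter--Widman construction and each invocation of \cref{sec:lipschitz-estimate-3,sec:bound-hold-estim-1} must be checked to be genuinely $\varepsilon$-independent, which is precisely what those propositions were built to guarantee. The most delicate technical point is the iteration leading to the mixed boundary-decay case in \eqref{eq:72}, where one must balance the two one-sided Hausdorff-type estimates so that the exponents $\sigma$ and $\sigma_1$ can be chosen independently while keeping the constant uniform in $\varepsilon$.
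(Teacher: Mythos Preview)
Your proposal is correct and follows essentially the same approach as the paper, which simply observes that the VMO hypothesis in Theorem~5.4.1--2 and Lemma~5.4.5 of \cite{shenPeriodicHomogenizationElliptic2018} enters only through the interior and boundary H\"{o}lder estimates, now supplied by \cref{sec:lipschitz-estimate-3,sec:bound-hold-estim-1}. You in fact flesh out more detail than the paper does; in particular, your direct Caccioppoli-plus-maximum-principle derivation of \eqref{eq:69} is a clean and valid route to what the paper obtains by citing Shen's Lemma~5.4.5.
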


We now define the boundary Dirichlet corrector: For $1 \le i \le d,$
let $\Phi^{i,\varepsilon}\in H^1(\Omega)$ be the solution of the problem:
\begin{align}
\label{eq:36}
  \begin{split}
    -\Div \left[ \bfa \left( \frac{x}{\varepsilon}\right) \nabla
      \Phi^{i,\varepsilon}(x)  \right]
    &=0 \text{ for } x \in \Omega,\\
    \Phi^{i,\varepsilon}(z)
    &=z_i \text{ for } z \in \partial\Omega.
  \end{split}
\end{align}
The following proposition provides a bound on the boundary Dirichlet corrector.
\begin{proposition}
\label{sec:lipschitz-estimate-2}
Let $\Omega$ be a bounded $C^{1,\alpha}$-domain. Suppose that
$\bfa \in \fraM_{\per}(\lambda,L)$ is piecewise
$C^{\alpha}$-continuous. Then, for all $\varepsilon > 0$, the solution $\Phi^{i,\varepsilon}$ of
\eqref{eq:36} satisfies:
\begin{align}
\label{eq:37}
\norm{\nabla \Phi^{i,\varepsilon}}_{L^{\infty}(\Omega)} \le C,
\end{align}
where constant $C$ depends only on $\lambda, \Lambda$ and $\Omega$. 
\end{proposition}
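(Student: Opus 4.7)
My plan is to split the argument according to the size of the scale parameter $\varepsilon$, mirroring \cref{sec:impr-grad-estim}. Fix a threshold $\varepsilon_0 > 0$ smaller than the thresholds appearing in \cref{sec:lipschitz-estimate,sec:lipschitz-estimate-5,sec:bound-hold-estim-1,sec:bound-hold-estim-2}. For $\varepsilon \ge \varepsilon_0$, since the microscale is bounded below, the subdomains of $\Omega$ where $\bfa(x/\varepsilon)$ is $C^{\alpha}$-continuous have uniformly controlled number, size, mutual distance and $C^{1,\alpha}$-modulus. Because the Dirichlet datum $g(z) = z_i$ is smooth and the source vanishes, I would apply \cite[Corollary 1.3]{liGradientEstimatesSolutions2000} directly to \eqref{eq:36} to obtain \eqref{eq:37} with a constant depending only on $\lambda, \Lambda, \Omega$.

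For $\varepsilon < \varepsilon_0$, fix $x_0 \in \Omega$ and set $\delta \coloneqq \dist(x_0, \partial \Omega)$. When $\delta \ge \rho_0$ for some fixed threshold $\rho_0$ independent of $\varepsilon$, I would apply \cref{sec:lipschitz-estimate-5} on $B(x_0, \rho_0)$ with $f = 0$, which reduces \eqref{eq:37} to bounding $\bigl( \dashint_{B(x_0, \rho_0)} |\nabla \Phi^{i,\varepsilon}|^2 \bigr)^{1/2}$. This in turn follows from Caccioppoli's inequality together with the maximum principle bound $\|\Phi^{i,\varepsilon}\|_{L^\infty(\Omega)} \le \sup_{\partial \Omega} |z_i| \le \diam(\Omega)$.

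The delicate case is $\delta < \rho_0$, where I would establish the analogue of \cref{sec:lipschitz-estimate} near a boundary point. After a local $C^{1,\alpha}$-flattening of $\partial \Omega$, I would carry out the three-step \emph{improvement, iteration, blow-up} scheme on a half-ball. The improvement step compares the solution to the corresponding constant-coefficient homogenized Dirichlet problem on the half-ball, for which the $C^{1,\alpha'}$ Schauder estimate up to the boundary is available (since $\partial \Omega \in C^{1,\alpha}$ and $g(z) = z_i \in C^{\infty}$), and uses a boundary corrector adapted from the cell corrector $\vec{\omega}$ in \eqref{eq:pp453}. The boundary H\"older estimate \cref{sec:bound-hold-estim-1} and the Green's function bounds of \cref{sec:bound-hold-estim-2} supply the control on the lower-order remainders arising in each iteration, as in \cite{avellanedaCompactnessMethodsTheory1987,shenPeriodicHomogenizationElliptic2018}. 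The iteration is stopped at a scale comparable to $\varepsilon$, at which point a rescaled blow-up together with \cite[Theorem 1.1]{liGradientEstimatesSolutions2000}, as in the last step of the proof of \cref{sec:lipschitz-estimate}, closes the estimate. Combining the near-boundary and interior cases yields \eqref{eq:37}.

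The principal obstacle is the boundary compactness argument itself: one must verify that the constants produced by the improvement step are uniform in the boundary position $x_0$, and the piecewise (rather than global) H\"older continuity of $\bfa$ forces the substitution of \cite{liGradientEstimatesSolutions2000} for the classical Schauder estimate wherever regularity of the $\varepsilon$-problem is needed, as well as \cref{sec:lipschitz-estimate-3} in place of the standard interior H\"older estimate. This is precisely the reason the preceding boundary H\"older and Green's function results had to be reproved in \cref{sec:bound-hold-estim-1,sec:bound-hold-estim-2} without the usual global H\"older hypothesis on $\bfa$.
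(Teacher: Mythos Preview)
Your proposal is plausible in spirit but takes a different and harder route than the paper, and there is a circularity risk you should be aware of.

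The paper does not redo the boundary compactness argument for the Dirichlet corrector. Instead, following \cite[Theorem 5.4.4]{shenPeriodicHomogenizationElliptic2018}, it exploits the specific structure of \eqref{eq:36} by comparing $\Phi^{i,\varepsilon}$ with the explicit ansatz $x_i + \varepsilon\,\omega^i(x/\varepsilon)$. The difference $w^{\varepsilon}\coloneqq \Phi^{i,\varepsilon}-x_i-\varepsilon\,\omega^i(x/\varepsilon)$ solves the homogeneous equation with Dirichlet data $-\varepsilon\,\omega^i(\cdot/\varepsilon)$. The key observation, which you do not invoke, is that $\nabla\vec{\omega}\in L^{\infty}(Y)$ already under the piecewise $C^{\alpha}$ hypothesis (via Li--Vogelius applied to the cell problem, cf.\ \cite{francfortEnhancementElastodielectricsHomogenization2021,zhangInteriorHolderGradient2013a}). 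Hence the boundary data of $w^{\varepsilon}$ has $L^{\infty}$ norm $O(\varepsilon)$ and Lipschitz norm $O(1)$, so the Green function estimate \eqref{eq:69} of \cref{sec:bound-hold-estim-2} bounds local $L^2$ averages of $\nabla w^{\varepsilon}$ uniformly; the interior Lipschitz estimate of \cref{sec:lipschitz-estimate-5} then upgrades this to a pointwise bound. The case $\varepsilon\ge\varepsilon_0$ is handled by \cite[Theorem 1.2]{liGradientEstimatesSolutions2000} as you say.

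Your plan, by contrast, amounts to proving the full boundary Lipschitz estimate (\cref{sec:bound-hold-estim-3}) and reading off \eqref{eq:37} as a special case. That is logically backwards in the paper's architecture: the standard boundary compactness iteration \`a la \cite{avellanedaCompactnessMethodsTheory1987,shenPeriodicHomogenizationElliptic2018} uses the Dirichlet corrector $\Phi^{i,\varepsilon}$ itself (not the periodic corrector $\vec{\omega}$) as the boundary-adapted corrector in the iteration step, precisely because $\Phi^{i,\varepsilon}=x_i$ on $\partial\Omega$. If you substitute $\vec{\omega}$ there, each rescaling introduces a boundary mismatch of size $O(\varepsilon)$ whose control requires the very Lipschitz bound you are after. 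So unless you devise a non-standard iteration, your boundary compactness scheme presupposes \eqref{eq:37}. The paper's direct comparison with $x_i+\varepsilon\,\omega^i(x/\varepsilon)$, enabled by $\nabla\vec{\omega}\in L^{\infty}$, is exactly the device that breaks this circle.
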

The proof is similar to \cite[Theorem
5.4.4]{shenPeriodicHomogenizationElliptic2018}. One only needs to use
three observations: 
\begin{itemize}
\item The case $c\varepsilon \ge \min \{c\varepsilon_0, r\}$ follows
  from \cite[Theorem 1.2]{liGradientEstimatesSolutions2000}, by the
  same argument used in \cref{sec:impr-grad-estim}.
\item Let $\vec{\omega} = (\omega^1,\omega^2,\ldots,\omega^d)$ be the
  solutions of the cell problems \eqref{eq:pp453}. Given only that $\bfa$ is {piecewise}
  $C^{\alpha}$-continuous, then $\nabla \vec{\omega}$ is bounded in $L^{\infty}(Y, \RR^{d\times d})$,
  see the first paragraph in the proof of \cite[Theorem
  3.2]{francfortEnhancementElastodielectricsHomogenization2021} or
  \cite[Corollary 3.5]{zhangInteriorHolderGradient2013a}.
\item The interior Lipschitz estimate in
  \cref{sec:lipschitz-estimate-5} only requires $\bfa$ is piecewise H\"{o}lder continuous.
\end{itemize}
We now combine \cref{sec:lipschitz-estimate-2},
\cref{sec:lipschitz-estimate-5} and \cite[Theorem
1.2]{liGradientEstimatesSolutions2000} to obtain a discontinuous
coefficient-version of \cite[Theorem
5.5.1]{shenPeriodicHomogenizationElliptic2018}.
\begin{proposition}[Boundary Lipschitz Estimate]
\label{sec:bound-hold-estim-3}
Suppose that $\bfa \in \fraM_{\per}(\lambda,L)$ is piecewise
$C^{\alpha}$-continuous,
% $f \in L^{d + \gamma}(\Omega)$ for some $0 < \gamma < 1$,
and $\Omega$ is a $C^{1,\alpha}$-domain. Fix $x_0\in \partial\Omega$,
$0 < r < \diam (\Omega)$ and
$0 < \mu  <1$. Let
$g \in C^{1,\alpha}\left( B(x_0,r) \cap \partial \Omega \right).$ There exist $\varepsilon_0 =\varepsilon_0 (\mu,\lambda,\Lambda,d,\Omega) > 0$ and $C=C(\mu,\lambda,\Lambda,d,\Omega) > 0$ such that, for all $0 < \varepsilon < \varepsilon_0$, the
weak solution $\varphi^{\varepsilon} \in H^1(B(x_0,r))$ of the equation:
\begin{align*}
-\Div \left[ \bfa \left( \frac{x}{\varepsilon} \right)\nabla
  \varphi^{\varepsilon} \right]
  &= 0 \text{ in } B(x_0,r) \cap \Omega, \\
  \varphi^{\varepsilon}
  &= g \text{ on } B(x_0,r) \cap \partial \Omega,
\end{align*}
satisfies:
\begin{align}
  \label{eq:73}
  \begin{split}
  &\norm{\nabla \varphi^{\varepsilon}}_{L^{\infty}(B(x_0,r/2)\cap
  \partial \Omega)}\\
  &\le C \left[
    r^{-1} \left( \dashint_{B(x_0,r)\cap \Omega}
    \abs{\varphi^{\varepsilon}}^2 \right)^{\frac{1}{2}} + r^{\alpha}
    \norm{\nabla_{\tan}g}_{C^{0,\alpha}(B(x_0,r) \cap \partial
    \Omega)} \right.\\
  &\quad \qquad \left.
 {}{+}{}   \norm{\nabla_{\tan}g}_{L^{\infty}(B(x_0,r) \cap \partial \Omega)}
    + r^{-1} \norm{g}_{L^{\infty}(B(x_0,r) \cap \partial \Omega)} \vphantom{ \left( \dashint_{B(x_0,r)\cap \Omega}
    \abs{\varphi^{\varepsilon}}^2 \right)^{\frac{1}{2}}}
  \right].
  \end{split}
\end{align}

\end{proposition}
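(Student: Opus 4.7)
The plan is to adapt the proof of \cite[Theorem 5.5.1]{shenPeriodicHomogenizationElliptic2018} to the piecewise Hölder setting, paralleling the interior case in \cref{sec:lipschitz-estimate}. After the usual translation, dilation, and subtraction of a constant to reduce to $x_0 = 0$, $r = 1$, and normalized data, I would split into two regimes. When $c\varepsilon \ge \min\{c\varepsilon_0, r\}$, \eqref{eq:73} follows directly from \cite[Theorem 1.2]{liGradientEstimatesSolutions2000}, using the geometric control of the $C^{1,\alpha}$-moduli of the subdomains exactly as in \cref{sec:impr-grad-estim}. The substantive work is confined to the small-$\varepsilon$ regime, which I would treat by a three-step boundary compactness argument.

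\emph{Improvement.} For some $\theta \in (0, 1/4)$ depending on $\mu, \lambda, \Lambda, d, \Omega$, and any such $\varphi^{\varepsilon}$, there exist $M_0 \in \RR$ and $E \in \RR^d$ with $|M_0| + |E| \le C$ satisfying
\[
\left(\dashint_{B(0,\theta)\cap\Omega}\bigl|\varphi^{\varepsilon} - M_0 - E_i\,\Phi^{i,\varepsilon}\bigr|^2\right)^{1/2} \le \theta^{1+\mu}.
\]
The proof is by contradiction: assuming failure yields sequences $\varepsilon_n \to 0$, $\bfa_n \to \bfa_0$ (in the $H$-convergence sense), and $\varphi_n \rightharpoonup \varphi_0$, with the compactness provided by the boundary Hölder estimate \cref{sec:bound-hold-estim-1}, the De Giorgi--Nash--Moser theorem, and Caccioppoli's inequality. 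The limit $\varphi_0$ solves a constant-coefficient Dirichlet problem on $B(0,1)\cap \Omega$ with $C^{1,\alpha}$ boundary data, and is thus $C^{1,\mu}$ up to $\partial \Omega$ by classical boundary Schauder theory; its Taylor polynomial at the origin supplies $M_0$ and $E$. The final ingredient is that $\Phi^{i,\varepsilon_n}(x) - x_i \to 0$ strongly in $L^2$, which follows from the $L^{\infty}$-boundedness of the cell-problem corrector $\omega^i$ (used in \cref{sec:lipschitz-estimate-2}).

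\emph{Iteration and blow-up.} Applying the improvement recursively to $\theta^{-(1+\mu)}\bigl[\varphi^{\varepsilon}(\theta\,\cdot) - M_0 - E_i\,\Phi^{i,\varepsilon}(\theta\,\cdot)\bigr]$, which still solves an equation of the same type at scale $\varepsilon/\theta$, produces sequences $\{M_k\}, \{E_k\}$ with
\[
\left(\dashint_{B(0,\theta^k)\cap\Omega}\bigl|\varphi^{\varepsilon} - M_k - E_{k,i}\,\Phi^{i,\varepsilon}\bigr|^2\right)^{1/2} \le C\,\theta^{k(1+\mu)},
\]
valid so long as $\theta^k \gtrsim \varepsilon$. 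Choosing the maximal such $k$ and feeding this information into the interior Lipschitz estimate \cref{sec:lipschitz-estimate-5} applied to $\varphi^{\varepsilon} - M_k - E_{k,i}\,\Phi^{i,\varepsilon}$ on a ball of radius $\approx \varepsilon$, together with the uniform gradient bound \eqref{eq:37} on the Dirichlet correctors, yields the pointwise estimate \eqref{eq:73}. The main obstacle is ensuring the compactness argument survives with a merely piecewise Hölder coefficient: the classical boundary $C^{1,\alpha}$-estimate for $\Phi^{i,\varepsilon}$ is unavailable, so one must instead rely on the $W^{1,\infty}(Y)$-boundedness of the cell-problem correctors and the interior Lipschitz estimate \cref{sec:lipschitz-estimate-5}, both of which are precisely the new ingredients developed earlier in this section.
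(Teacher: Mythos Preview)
Your overall strategy is correct and coincides with the paper's: adapt the proof of \cite[Theorem~5.5.1]{shenPeriodicHomogenizationElliptic2018} by replacing the VMO/H\"older-dependent ingredients with their piecewise-H\"older analogues (the boundary H\"older estimate of \cref{sec:bound-hold-estim-1}, the De Giorgi--Nash--Moser theorem, the Dirichlet-corrector bound \eqref{eq:37}, and Li--Vogelius). The improvement and iteration steps are described accurately, including the key point that $\Phi^{i,\varepsilon_n}(x)-x_i \to 0$ relies only on the $L^{\infty}$-bound for $\vec{\omega}$ rather than on any H\"older regularity of $\bfa$.

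There is one genuine slip in the blow-up step. You propose to control $\nabla\bigl(\varphi^{\varepsilon}-M_k-E_{k,i}\Phi^{i,\varepsilon}\bigr)$ at scale $\theta^k\approx\varepsilon$ via the \emph{interior} Lipschitz estimate \cref{sec:lipschitz-estimate-5}. But the points at which \eqref{eq:73} is claimed lie on (or arbitrarily close to) $\partial\Omega$, so no interior ball of radius $\approx\varepsilon$ fits inside $\Omega$ there. The correct move---parallel to the interior blow-up in \cref{sec:lipschitz-estimate}, where Li--Vogelius's \emph{interior} Theorem~1.1 is invoked---is to rescale the remainder at scale $\theta^k$, observe that the effective microscale becomes $\varepsilon/\theta^k=O(1)$, and then apply the \emph{boundary} result \cite[Theorem~1.2]{liGradientEstimatesSolutions2000}, with the $C^{1,\alpha}$-moduli of the subdomains controlled uniformly exactly as in \cref{sec:impr-grad-estim}. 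The boundary data of the rescaled remainder is $g-M_k-E_{k,i}x_i$, whose $C^{1,\alpha}$ norm at scale $\theta^k$ is governed by the iteration. This is why the paper lists \cite[Theorem~1.2]{liGradientEstimatesSolutions2000} (and not only \cref{sec:lipschitz-estimate-5}) among the ingredients. With this substitution your argument goes through.
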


The estimate \eqref{eq:34} of \cref{sec:gradient-estimate-1} is a consequence of \cref {sec:lipschitz-estimate-5} and
  \cref{sec:bound-hold-estim-3}, by an argument similar to  \cite[Theorem 5.6.2]{shenPeriodicHomogenizationElliptic2018}.

\section{Application to magnetic suspensions}

\label{sec:appl-magn-susp}

In this section, we apply the regularity results obtained above to the rigorous homogenization procedure discussed in \cite{dangHomogenizationNondiluteSuspension2021}. For that, we first recap the formulation of the fine-scale problem and the homogenization result itself. We begin by introducing the definition of two-scale convergence, which will be used below.
\begin{definition}
A sequence $\{ v^\varepsilon \}_{\varepsilon>0}$ in $L^2(\Omega)$ is said to \emph{two-scale converge} to $v = v({x},{y})$, with $v \in L^2 (\Omega \times Y)$, if and only if:
\begin{align*}
%\label{2sc}
    \lim_{\varepsilon \to 0} \int_\Omega v^\varepsilon(\xx) \psi \left( \xx, \frac{\xx}{\varepsilon}\right) \di \xx 
    = \frac{1}{\abs{Y}} \int_\Omega \int_Y v(\xx,\yy) \psi(\xx,\yy) \di \yy \di \xx,
\end{align*}
for any test function $\psi = \psi ({x}, {y})$, with $\psi \in \mathcal{D}(\Omega, C_\per^\infty (Y))$,
see \cite{allaireHomogenizationTwoscaleConvergence1992,cioranescuIntroductionHomogenization1999,nguetsengGeneralConvergenceResult1989}. In this case, we write $v^\varepsilon \tscale v$.
\end{definition}

Let the kinematic viscosity be denoted by $\nu = \frac{\eta}{\rho_f}$, where $\eta>0$ and $\rho_f>0$ are the fluid viscosity and the fluid density, respectively.  The dimensionless quantities that appear in this problem are the (hydrodynamic) \emph{Reynolds number}
$\nRe = UL/\nu$ , the \emph{Froude number} $\nFr =U/\sqrt{FL}$ and the \emph{coupling parameter}
$S = \frac{B^2}{\rho_f \Lambda U^2},$
where $L, U, B$ and $F$ are the characteristic scales
corresponding to length, fluid velocity,
magnetic field and body density force, respectively. Moreover, $ \Lambda >0$ is defined in \ref{cond:a-bound}.

From now on, we suppose $\Omega$ is $C^{3,\alpha}$, which is
needed for the corrector result below. Suppose further that
$\bg \in H^1({\Omega},\RR^d)$, $k \in C^{1,\alpha}(\partial {\Omega})$
and $f \in L^{\infty}(\Omega)$.  Let $\uu^{\varepsilon}$ and
$p^{\varepsilon}$ be the fluid velocity and the fluid pressure,
respectively. Also, in a space free of current, the magnetic field
strength is given by
$\mathbf{H}^{\varepsilon} = \nabla \varphi^{\varepsilon}$, for some
magnetic potential $\varphi^{\varepsilon}(\xx)$.  Let
$\uu^{\varepsilon} \in H_0^1(\Omega,\RR^d)$,
$p^{\varepsilon}\in L^2(\Omega)/\RR$, and
$\varphi^{\varepsilon} \in H^1(\Omega)$ be the solution of the
following boundary value problem:
\begin{subequations}
  \label{eq:p410} 
\begin{align}
\label{eq:p411}
  -\Div \left[ \vec{\sigma}(\uu^{\varepsilon},p^{\varepsilon}) +
  \bT(\varphi^{\varepsilon}) \right]
  &= \frac{1}{\nFr^2} \bg, && \text{ in } \Omega_f^{\varepsilon}\\
  \label{eq:p419}
  \Div \uu^{\varepsilon}
  &= 0, && \text{ in } \Omega_f^{\varepsilon}\\
  \label{eq:p420}
\DD(\uu^{\varepsilon}) 
  &= 0, &&  \text{ in }\Omega_s^{\varepsilon} \\
  \label{eq:p421}
  -\Div \left[ \bfa \left( \frac{\xx}{\varepsilon} \right) \nabla
  \varphi^{\varepsilon} \right]
  &=  f && \text{ in }\Omega,
\end{align}
\end{subequations}
together with the balance equations:
\begin{subequations}
  \label{eq:p413}
\begin{align}
\label{eq:p414}
\int_{\Gamma_{i}^{\varepsilon}} \left[
  \vec{\sigma}(\uu^{\varepsilon},p^{\varepsilon}) +
  \bT(\varphi^{\varepsilon}) \right] \nn_i\di \hmeas
  &= 0,\\
  \label{eq:p422}
  \int_{\Gamma_{i}^{\varepsilon}} \left( \left[
  \vec{\sigma}(\uu^{\varepsilon},p^{\varepsilon}) +
  \bT(\varphi^{\varepsilon}) \right]\nn_i \right) \times \nn_i\di \hmeas
  &= 0,
\end{align}
\end{subequations}
and boundary conditions:
\begin{subequations}
  \label{eq:p415} 
\begin{align}
\label{eq:p416}
  \uu^{\varepsilon}
  &= 0, \text{ on }\partial \Omega,\\
  \label{eq:p423}
 \varphi^{\varepsilon}
  &= k
  , \text{ on }\partial \Omega,
  %\label{eq:p424}
  %\llbracket \mu \nabla \varphi^{\varepsilon} \rrbracket\cdot \nn
  %&= 0, \text{ on }\partial \Omega_s^{\varepsilon},%\\
  %\label{eq:p425}
  %\llbracket \nabla \varphi^{\varepsilon} \rrbracket \times \nn
  %&= 0, \text{ on } \partial \Omega_s^{\varepsilon},
\end{align}
\end{subequations}
where:
\begin{subequations}
  \label{eq:p417} 
\begin{align}
\label{eq:p418}
  \vec{\sigma}(\uu^{\varepsilon},p^{\varepsilon})
  &\coloneqq \frac{2}{\nRe} \DD(\uu^{\varepsilon}) -
    p^{\varepsilon}\II,\\  \DD(\uu^{\varepsilon}) 
&\coloneqq \frac{\nabla \uu^{\varepsilon} + \nabla^{\top}\uu^{\varepsilon}}{2},\\
  \label{eq:p426}
  \bT(\varphi^{\varepsilon})
  &\coloneqq S \bfa \left( \frac{\xx}{\varepsilon} \right) \left( \nabla
    \varphi^{\varepsilon} \otimes \nabla \varphi^{\varepsilon} -
    \frac{1}{2} \abs{\nabla \varphi^{\varepsilon}}^2 \II \right).
\end{align}
\end{subequations}
are the \textit{rate of strain}, the \textit{Cauchy stress} and the \textit{Maxwell stress} tensors, respectively. 
For the detailed derivation and the physical meaning of the equations above, we refer the readers to
\cite{dangHomogenizationNondiluteSuspension2021} and references
therein.  Observe that, in the context of this paper, we consider the Dirichlet boundary condition
\eqref{eq:p423}, instead of  a Neumann boundary condition \eqref{NC} in \cite{dangHomogenizationNondiluteSuspension2021}, to relax the regularity
assumption on the magnetic permeability needed in \cite{dangHomogenizationNondiluteSuspension2021}.
Then, the weak formulation for \eqref{eq:p421} and \eqref{eq:p423} is given by: 
\begin{align}
  \label{eq:21}
  \begin{split}
\int_{\Omega} &\bfa \left( \frac{x}{\varepsilon} \right) \nabla
  \left(\varphi^{\varepsilon} - k  \right) \cdot \nabla \xi \di x\\
  &\quad= -\int_{\Omega} \bfa \left( \frac{x}{\varepsilon} \right) \nabla
    k  \cdot \nabla \xi \di x +
    \int_{\Omega} f \xi \di x, \quad \forall \xi \in H_0^1(\Omega).
    \end{split}
\end{align}
One immediately has that 
$\norm{\varphi^{\varepsilon}}_{H^1(\Omega)} \le C \left(
  \norm{k}_{H^{1/2}(\partial \Omega)} +
  \norm{f}_{L^q(\Omega)}\right)$, which implies that
$\varphi^{\varepsilon}$ is two-scale convergent. Choosing a test
function as in \cite[Lemma
3.7]{dangHomogenizationNondiluteSuspension2021}, we obtain the cell
problem \eqref{eq:p453} and the first two effective equations defined in
\eqref{summary-eqn} below.

Moreover, \cref{sec:gradient-estimate-1} ensures that $\nabla
\varphi^{\varepsilon}$ is uniformly bounded in $L^\infty(\Omega,
\RR^d)$, with respect to
$\varepsilon \in (0,\varepsilon_0)$.  Therefore, we obtain the existence, uniqueness and a priori
bounds for $\uu^{\varepsilon}$ and $p^{\varepsilon}$ as in
\cite[Corollary 3.11]{dangHomogenizationNondiluteSuspension2021}. Here, we have relaxed the restrictive assumption \eqref{supinf} made in \cite{dangHomogenizationNondiluteSuspension2021} and we can use our results in the case when the
constant magnetic permeability is anisotropic, namely, when $\bfa$ is a matrix.

To carry on with the homogenization formulation, for $1 \le i,j \le d,$ denote by $\bP^{ij}$ the vector defined by $\bP^{ij}_k \coloneqq y_j
\delta_{ik}$.  Consider $\omega^i \in H^1_{\per}(Y)/\RR$, the solution of: 
  \begin{equation}
\label{eq:p453}
-\Div_{\yy} \left[ 
{\bfa(\yy) }\left(\ee^i +\nabla_{\yy} \omega^i(\yy)
  \right)  \right]
  = 0 \text{ in } Y.
  \end{equation}
Also, consider $\vec{\chi}^{ij} \in H^1_{\per}(Y,\RR^d)/\RR$ and
$q^{ij}\in L^2(Y)/\RR$, solving:
\begin{align}
\label{eq:504}
  \begin{split}
    \Div_{\yy} \left[ \DD_{\yy} \left(\bP^{ij} - \vec{\chi}^{ij}
      \right) + q^{ij}\II \right]
    &= 0 \text{ in } Y_f,\\
    \Div_{\yy} \vec{\chi}^{ij}
    &= 0 \text{ in }Y,\\
    \DD_{\yy} \left( \bP^{ij} - \vec{\chi}^{ij} \right)
    &= 0 \text{ in } Y_s,\\
    \int_{\Gamma}  \left[
      \DD_{\yy} \left( \bP^{ij} - \vec{\chi}^{ij}
      \right) - q^{ij}\II
    \right] \nn_{\Gamma} \di \hmeas 
    &=0,\\
    \int_{\Gamma}  \left[
       \DD_{\yy} \left( \bP^{ij} - \vec{\chi}^{ij}
      \right) - q^{ij}\II
    \right] \nn_{\Gamma} \times \nn_{\Gamma} \di \hmeas 
    &=0,
    \end{split}
    \end{align}
and consider $\vec{\xi}^{ij} \in H^1_{\per}(Y,\RR^d)/\RR$ and
$r^{ij} \in L^2(Y)/\RR$, solving:
\begin{align}
\label{eq:504b}
  \begin{split}
    \Div_{\yy} \left[  \DD_{\yy} \left( \vec{\xi}^{ij}
      \right) + r^{ij}\II +  \vec{\tau}^{ij} \right]
    &= 0 \text{ in } Y_f,\\
    \Div_{\yy} \vec{\xi}^{ij}
    &= 0 \text{ in }Y,\\
    \DD_{\yy} \left( \vec{\xi}^{ij} \right)
    &= 0 \text{ in } Y_s,\\
    \int_{\Gamma}  \left[
      \DD_{\yy} \left( \vec{\xi}^{ij}
      \right) + r^{ij}\II +  \vec{\tau}^{ij}
    \right] \nn_{\Gamma} \di \hmeas  
    &=0,\\
    \int_{\Gamma}  \left[
      \DD_{\yy} \left( \vec{\xi}^{ij}
      \right) + r^{ij}\II +  \vec{\tau}^{ij}
    \right] \nn_{\Gamma} \times \nn_{\Gamma} \di \hmeas 
    &=0.
  \end{split}
\end{align}

We also define:
\begin{align}
\label{eq:513}
  \begin{split}
    \calA_{jk}
  &\coloneqq \frac{1}{\abs{Y}} \int_Y \bfa(\yy) (\ee^k+ \nabla \omega^k(\yy))\cdot
  (\ee^j + \nabla \omega^j(\yy)) \di \yy,\\
  \calN^{ij}_{mn}
  &\coloneqq \frac{1}{\abs{Y}} \int_Y 
  \DD_{\yy}(\bP^{ij}-\vec{\chi}^{ij}) : \DD_{\yy}(\bP^{mn}-\vec{\chi}^{mn})
  \di \yy,\\
  \vec{\tau}_{\cel}^{ij}
  &\coloneqq
   { \bfa (\yy)} \left[ (\ee^i + \nabla_{\yy} \omega^i)  \otimes ( \ee^j
    + \nabla_{\yy} \omega^j) - \frac{1}{2} ( \ee^i +
    \nabla_{\yy} \omega^i ) \cdot ( \ee^j + \nabla_{\yy}
    \omega^j ) \II \right],~ y \in Y,\\
  \mathcal{B}^{ij}
  &\coloneqq
    \frac{1}{\abs{Y}} \int_Y \left( \DD_{\yy}(\vec{\xi}^{ij}) 
    +  \vec{\tau}^{ij}\right)\di \yy,
    \end{split}
\end{align}
where $\calA$ is the \emph{effective magnetic permeability},
which is symmetric and elliptic. The tensor $\calN \coloneqq \left\{ \calN^{ij}_{mn}\right\}_{1 \le i,j,m,n \le
  d}$ is the \emph{effective viscosity}, and it is a fourth rank
tensor.  Moreover, $\calN$ is symmetric, i.e.
$\calN^{ij}_{mn} = \calN^{mn}_{ij} = \calN^{ji}_{mn} =
\calN^{ij}_{nm}$, and it satisfies the Legendre-Hadamard
condition (or strong ellipticity condition), i.e. there exist
$\beta > 0$ such that, for all ${\zeta}, \eta \in \RR^d$,
one has $\calN^{ij}_{mn} \zeta_i \zeta_m \eta_j \eta_n \ge \beta \abs{{\zeta}}^2 \abs{{\eta}}^2$.
%\begin{align*}
%    \calN^{ij}_{mn} \zeta_i \zeta_m \eta_j \eta_n \ge \beta \abs{{\zeta}}^2 \abs{{\eta}}^2.
%\end{align*}
The matrix $\vec{\tau}_{\cel}$ is the \emph{Maxwell stress tensor} on $Y$, and $\calB$ is the \emph{effective coupling matrix}.

By the same argument as in Theorem~3.5, Lemma~3.9 and Lemma~3.14 of \cite{dangHomogenizationNondiluteSuspension2021}, the following result holds:
\begin{theorem} \label{cor:main}
Let $(\varphi^{\varepsilon}, \uu^{\varepsilon},p^{\varepsilon}) \in H^1(\Omega) \times H_0^1(\Omega,\RR^d) \times L_0^2(\Omega)$ be the solution of \eqref{eq:p410}. Then
\begin{align*}
    \varphi^\varepsilon 
    &\rightharpoonup \varphi^0 \text{ in } H^1(\Omega), \\
    \uu^\varepsilon
    &\rightharpoonup \uu^0 \text{ in } H_0^1(\Omega,\RR^d),\\
    p^\varepsilon 
    &\rightharpoonup \pi^0 \text{ in } L_0^2(\Omega),
\end{align*}
where $\varphi^0, \uu^0$ and $\pi^0$ are solutions of: 
\begin{align}
\label{summary-eqn}
\begin{aligned}
    -\Div \left( \calA \nabla\varphi^0 \right) 
    &= f &&\text{ in }\Omega,\\
    \varphi^0 
    &=k &&\text{ on }\partial\Omega,\\
    \Div \left[ 
  \frac{2}{\nRe}\calN^{ij}\DD \left( \uu^0 \right)_{ij} - {\pi^0} + 
  S\mathcal{B}^{ij} \frac{\partial \varphi^0}{\partial x_i}
  \frac{\partial \varphi^0}{\partial x_j}\right]
  &= \frac{1}{\nFr^2} \bg &&\text{ in }\Omega,\\
  \Div \uu^0 
  &= 0 &&\text{ in }\Omega\\
  \uu^0
  &= 0 && \text{ on } \partial \Omega,
  \end{aligned}
\end{align}
with $\calA,\calN^{ij},$ $\mathcal{B}^{ij}$, $1 \le i,j \le d$, defined in \eqref{eq:513}.
Moreover, the first-order correctors satisfy:
\begin{align*}
%\label{eq:22}
\lim_{\varepsilon\to 0}\norm{\nabla \varphi^{\varepsilon} (\cdot) - \nabla \varphi^0(\cdot) -
  \nabla_{\yy} \varphi^1 \left( \cdot, \frac{\cdot}{\varepsilon}
  \right)}_{L^2(\Omega,\RR^d)} &=0,\\
  \lim_{\varepsilon \to 0} \norm{\DD(\uu^{\varepsilon})(\cdot) - \DD
  (\uu^0)(\cdot) - \DD_{\yy} (\uu^1) \left( \cdot,
  \frac{\cdot}{\varepsilon} \right)}_{L^2(\Omega,\RR^{d\times d})}
  &= 0,
\end{align*}
where: 
\begin{align*}
 \varphi^1(\xx,\yy)
  &\coloneqq \omega^i(\yy) \frac{\partial \varphi^0}{\partial x_i}(\xx),\\
  \uu^1(\xx, \yy)
  &\coloneqq -\DD\left(\uu^0(\xx)\right)_{ij} \vec{\chi}^{ij}(\yy) +
  S\frac{\partial \varphi^0}{\partial x_i}(\xx) \frac{\partial
  \varphi^0}{\partial x_j}(\xx) \vec{\xi}^{ij}(\yy).
\end{align*}
\end{theorem}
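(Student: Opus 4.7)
The plan is to adapt the two-scale convergence argument of \cite[Theorem~3.5, Lemma~3.9, Lemma~3.14]{dangHomogenizationNondiluteSuspension2021}, substituting the global Lipschitz estimate of \cref{sec:gradient-estimate-1} wherever the precursor paper invoked the restrictive hypothesis \eqref{supinf} and the weaker bound \eqref{eq:493}. The new regularity is only needed to control the nonlinear Maxwell stress $\bT(\varphi^\varepsilon)$; the remainder of the argument goes through essentially unchanged.

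First I would derive the uniform a priori bounds. Testing \eqref{eq:21} against a suitable $H^1$-lifting of the boundary datum $k$ gives $\norm{\varphi^\varepsilon}_{H^1(\Omega)}\le C\bigl(\norm{k}_{H^{1/2}(\partial\Omega)}+\norm{f}_{L^\infty(\Omega)}\bigr)$ as usual. Crucially, \cref{sec:gradient-estimate-1} upgrades this to $\norm{\nabla\varphi^\varepsilon}_{L^\infty(\Omega)}\le C$, uniformly for $\varepsilon\in(0,\varepsilon_0)$, and therefore $\norm{\bT(\varphi^\varepsilon)}_{L^\infty(\Omega,\RR^{d\times d})}\le C$. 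Feeding this into the momentum equation \eqref{eq:p411}, using the rigid-motion constraint \eqref{eq:p420} and the balance laws \eqref{eq:p413} as in \cite[Corollary~3.11]{dangHomogenizationNondiluteSuspension2021}, one obtains uniform bounds on $\uu^\varepsilon$ in $H^1_0(\Omega,\RR^d)$ and on $p^\varepsilon$ in $L^2_0(\Omega)$.

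Next, by the Nguetseng--Allaire compactness theorem I would extract subsequences so that $\varphi^\varepsilon\tscale\varphi^0(x)$ with $\nabla\varphi^\varepsilon\tscale\nabla_x\varphi^0+\nabla_y\varphi^1$, and similarly $\uu^\varepsilon\tscale\uu^0$ with $\nabla\uu^\varepsilon\tscale\nabla_x\uu^0+\nabla_y\uu^1$, and $p^\varepsilon\tscale\pi^0+p^1$. Passing to the limit in \eqref{eq:21} against oscillating test functions $\xi(x)+\varepsilon\,\xi_1(x,x/\varepsilon)$ identifies the cell problem \eqref{eq:p453} for $\omega^i$ and yields $-\Div(\calA\nabla\varphi^0)=f$ with $\calA$ defined in \eqref{eq:513}. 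Testing the Stokes system against divergence-free oscillating fields that are rigid on each $Y^\varepsilon_{i,s}$, constructed as in \cite[Lemma~3.7]{dangHomogenizationNondiluteSuspension2021}, produces the cell problems \eqref{eq:504}--\eqref{eq:504b} and, in the limit, the homogenized momentum equation in \eqref{summary-eqn} with effective tensors $\calN$ and $\calB$.

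The main obstacle is the passage to the limit in the quadratic Maxwell stress $\bT(\varphi^\varepsilon)=S\,\bfa(x/\varepsilon)\bigl(\nabla\varphi^\varepsilon\otimes\nabla\varphi^\varepsilon-\tfrac{1}{2}\abs{\nabla\varphi^\varepsilon}^2\II\bigr)$, since only weak convergence of $\nabla\varphi^\varepsilon$ in $L^2$ is available a priori. The precursor paper handled this via the higher-integrability estimate \eqref{eq:493} under \eqref{supinf}; here, the uniform Lipschitz bound of \cref{sec:gradient-estimate-1} is strictly stronger and, combined with the strong corrector convergence
\begin{align*}
  \lim_{\varepsilon\to 0}\norm{\nabla\varphi^\varepsilon(\cdot)-\nabla\varphi^0(\cdot)-\nabla_y\varphi^1\bigl(\cdot,\tfrac{\cdot}{\varepsilon}\bigr)}_{L^2(\Omega,\RR^d)}=0,
\end{align*}
obtained by expanding $\int_\Omega\bfa(x/\varepsilon)\,\abs{\nabla\varphi^\varepsilon-\nabla_x\varphi^0-\nabla_y\varphi^1(x,x/\varepsilon)}^2\,\di x$ and invoking two-scale convergence with the cell equation \eqref{eq:p453}, resolves the issue. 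This produces the two-scale limit of $\nabla\varphi^\varepsilon\otimes\nabla\varphi^\varepsilon$ and generates the coupling term $S\calB^{ij}\partial_i\varphi^0\,\partial_j\varphi^0$ in \eqref{summary-eqn}. The corresponding strong $L^2$ corrector result for $\DD(\uu^\varepsilon)$ then follows from the analogous energy expansion centered at $\DD_x(\uu^0)+\DD_y(\uu^1)$, using the cell problems \eqref{eq:504}--\eqref{eq:504b} to cancel the leading-order terms exactly as in \cite[Lemma~3.14]{dangHomogenizationNondiluteSuspension2021}.
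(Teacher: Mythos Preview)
Your proposal is correct and follows essentially the same approach as the paper: the paper itself does not give a detailed proof but simply states that the argument of \cite[Theorem~3.5, Lemma~3.9, Lemma~3.14]{dangHomogenizationNondiluteSuspension2021} goes through once the restrictive hypothesis \eqref{supinf} and the $L^s$-bound \eqref{eq:493} are replaced by the uniform Lipschitz estimate of \cref{sec:gradient-estimate-1}, which is precisely the substitution you describe. One minor bibliographic point: in the paper the test-function construction of \cite[Lemma~3.7]{dangHomogenizationNondiluteSuspension2021} is invoked to derive the cell problem \eqref{eq:p453} and the first two effective equations for $\varphi^0$, whereas you cite it for the Stokes system; this does not affect the substance of your argument.
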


\section{Conclusions}
\label{sec:conclusions}
This paper concerns a homogenized description of a non-dilute
suspension of magnetic particles in a viscous flow. The results demonstrated
in this paper generalize the ones obtained by the authors in
\cite{dangHomogenizationNondiluteSuspension2021}, where a more
restrictive assumption on the magnetic permeability \eqref{supinf} was used and a
Neumann boundary condition \eqref{NC} was imposed instead of the Dirichlet condition
\eqref{DC}. 
\cref{cor:main} above demonstrates the
\emph{effective response} of a viscous fluid with a locally periodic
array of paramagnetic/diamagnetic particles suspended in it, given by
the system of equations \eqref{eq:p410}. The effective equations are
described by \eqref{summary-eqn}, with the effective coefficients given
in \eqref{eq:513}.  These effective quantities depend only on the
instantaneous position of the particles, their geometry, and the
magnetic and flow properties of the original suspension described by
\eqref{eq:p410}.
Using the tools introduced in
  \cite{liGradientEstimatesSolutions2000} and the compactness method, an
  improved regularity estimate for the gradient of the magnetic potential of the
  original fine-scale problem \eqref{eq:p410} was obtained, see
  \cref{sec:gradient-estimate-1}. This theorem allows us to drop the restrictive assumption \eqref{supinf} mentioned above. Comparing to the classical results on regularity of this type, we do require the coefficient matrix
  belongs to a VMO-space, see e.g. \cite{avellanedaCompactnessMethodsTheory1987, prangeUniformEstimatesHomogenization2014, shenPeriodicHomogenizationElliptic2018}. Recently, in \cite[Proposition
  3.1]{francfortEnhancementElastodielectricsHomogenization2021}, the authors obtained an $L^q$-bound of the
  gradient of the solution of the scalar divergence equation, uniform with respect to $\varepsilon$, for $ q < \infty$. Our result, in \cref{sec:gradient-estimate-1}, shows that the gradient bound actually holds for the case
  $q = \infty$.

\section*{Acknowledgements}
The authors would like to thank Antoine Gloria for bringing up the
results in
\cite{francfortEnhancementElastodielectricsHomogenization2021} to
their attention. The first author would also like to thank Christophe
Prange for allowing him to audit his class, in which he first learned about the compactness method used in this paper.  
%The work of the first
%two authors was partially supported by NSF grant DMS-1350248.  
The work of the first author was partially supported by the NSF through grant DMS-1350248.
The work of the third author was supported  by NSF grant DMS-2110036.  This
material is based upon work supported by and while serving at the
National Science Foundation for the second author Yuliya Gorb. Any
opinion, findings, and conclusions or recommendations expressed in
this material are those of the authors and do not necessarily reflect
views of the National Science Foundation.

\appendix

\section{Appendix}
\label{sec:an-appendix}

\begin{theorem}[Interior Schauder Estimates {\cite{gilbargEllipticPartialDifferential2001,hanEllipticPartialDifferential2011}}] 
\label{sec:an-appendix-1}
Let $\bfb \in \fraM (\lambda,\Lambda)$ be a constant matrix and $w \in
H^1(\Omega)$ be a weak solution of: 
\begin{align*}
\bfb_{ij} D_i D_jw =  f+ \sum_{i=1}^d D_i f_i.
\end{align*}
For every $\alpha \in (0,1)$, there exists a uniform constant
$C=C(\alpha,d,\lambda,\Lambda)$ such that if $\Omega' \subset\subset
\Omega$, with $\delta = \dist (\Omega',\partial\Omega)$, then the
following estimates hold: 
\begin{itemize}
\item[(i)] If $f \in L^p(\Omega)$, $f_i \in L^q(\Omega)$ and
  $\alpha = 1 - \frac{d}{q}= 2 - \frac{d}{p}\in (0,1)$, then $w \in
  C^{\alpha}(\Omega')$ and: 
\begin{align*}
%\label{eq:72}
  \norm{w}_{C^{\alpha}(\Omega')}
  \le C \delta^{-\frac{d}{2}+1-\alpha} \left(
  \norm{f}_{L^p(\Omega)} + \sum_{i=1}^d \norm{f_j}_{L^q(\Omega)} + \norm{w}_{H^1(\Omega)}
  \right).
\end{align*}
\item[(ii)] If $f \in L^p(\Omega),$ $\alpha = 1 - \frac{d}{p} \in (0,1)$ and
  $f_i \in C^{\alpha}(\Omega),$ then $\nabla w \in
  C^{\alpha}(\Omega')$ and: 
\begin{align*}
%\label{eq:74}
  \norm{\nabla w}_{C^{\alpha}(\Omega')}
  \le C \delta^{-\frac{d}{2}-\alpha} \left(
  \norm{f}_{L^p(\Omega)} + \sum_{i=1}^d
  \norm{f_i}_{C^{\alpha}(\Omega)} + \norm{w}_{H^1(\Omega)}
  \right).
\end{align*}
\end{itemize}

\end{theorem}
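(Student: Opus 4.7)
The plan is to follow the classical Campanato-space approach for constant-coefficient elliptic operators. First, since $\bfb$ is constant and elliptic, by an orthogonal diagonalization followed by an anisotropic rescaling $x = T y$ chosen so that $T^{\top}\bfb T = \II$, the equation reduces to $\Delta \tilde w = \tilde f + \sum_i D_i \tilde f_i$, where the Jacobian factors contribute only multiplicative constants depending on $\lambda, \Lambda, d$. Thus it suffices to establish the estimates for the Laplacian.

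The key ingredient is the decay estimates for harmonic functions: if $v$ solves $\Delta v = 0$ in $B_R$, then the pointwise interior bounds on derivatives of harmonic functions yield, for every $0 < \rho \le R$,
\begin{align*}
\int_{B_\rho} \abs{v - v_{B_\rho}}^2 \di x &\le C(\rho/R)^{d+2}\int_{B_R} \abs{v - v_{B_R}}^2 \di x,\\
\int_{B_\rho} \abs{\nabla v - (\nabla v)_{B_\rho}}^2 \di x &\le C(\rho/R)^{d+2}\int_{B_R} \abs{\nabla v - (\nabla v)_{B_R}}^2 \di x.
\end{align*}
For each ball $B_R \subset\subset \Omega$, I would then split $w = v + \psi$, where $v$ is the harmonic extension of $w|_{\partial B_R}$ and $\psi = w - v$ has zero Dirichlet data and satisfies the same inhomogeneous equation as $w$. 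A standard energy estimate combined with the Sobolev inequality and H\"{o}lder's inequality, using the specific exponents $p, q$ of the theorem, yields in case (i) a bound
\begin{align*}
\int_{B_R}\abs{\psi}^2 \di x \le C R^{d+2\alpha}\left(\norm{f}_{L^p(B_R)}^2 + \sum_i \norm{f_i}_{L^q(B_R)}^2\right),
\end{align*}
and in case (ii) an analogous bound for $\nabla\psi$ with $\norm{f_i}_{C^\alpha(B_R)}$ replacing $\norm{f_i}_{L^q(B_R)}$.

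Combining the decay of $v$ with the smallness of $\psi$ via the triangle inequality produces the Campanato-type iteration
\begin{align*}
\Phi(\rho) \le C(\rho/R)^{d+2}\Phi(R) + C R^{d+2\alpha}\bfK,
\end{align*}
where $\Phi(r)$ denotes $\int_{B_r}\abs{w - w_{B_r}}^2$ in case (i), or $\int_{B_r}\abs{\nabla w - (\nabla w)_{B_r}}^2$ in case (ii), and $\bfK$ collects the norms of the data on $B_R$. The classical Campanato iteration lemma then yields $\Phi(\rho) \le C\rho^{d+2\alpha}(\bfK + \norm{w}_{H^1(B_R)}^2)$ for every sufficiently small $\rho$, which by Campanato's characterization of H\"{o}lder spaces gives $w \in C^\alpha(\Omega')$ in case (i) and $\nabla w \in C^\alpha(\Omega')$ in case (ii).

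The main obstacle is the bookkeeping of scales producing the explicit $\delta$-powers in the final bound. One covers $\Omega'$ by balls of radius $R = \delta/2$, applies the local estimate, and tracks the rescaling carefully: the factor $R^{d+2\alpha}$ in the data term, together with the scale-dependent conversion from $\norm{w}_{L^2(B_R)}$ to $\norm{w}_{H^1(\Omega)}$ appearing when one controls the initial quantity $\Phi(R)$, produces after normalization the claimed powers $\delta^{-d/2 + 1 - \alpha}$ in case (i) and $\delta^{-d/2 - \alpha}$ in case (ii). Uniformity of the constant in $\lambda, \Lambda$ is inherited from the initial diagonalization step, since only $\lambda^{-1}$ and $\Lambda$ enter the matrix $T$.
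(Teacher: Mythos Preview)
The paper does not actually prove this theorem; it is recalled in the Appendix as a classical result, with citations to Gilbarg--Trudinger and Han--Lin, and no proof is given. Your Campanato-space approach (reduce to the Laplacian by a linear change of variables, compare with the harmonic replacement on each ball, iterate the resulting decay inequality, and invoke Campanato's characterization of H\"{o}lder spaces) is precisely the standard textbook argument found in those references, so there is nothing to compare against and your outline is appropriate.

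One small point worth tightening: in the reduction step you write ``orthogonal diagonalization,'' which presumes $\bfb$ is symmetric. Since the hypothesis $\bfb \in \fraM(\lambda,\Lambda)$ in the paper does not require symmetry, you should first pass to the symmetric part $\tfrac{1}{2}(\bfb + \bfb^{\top})$, which is legitimate because only $\bfb_{ij} D_i D_j$ appears; ellipticity and boundedness constants are preserved up to factors depending on $\lambda,\Lambda$. After that your diagonalization goes through.
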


\bibliographystyle{spmpsci}
\bibliography{homogenisation}

\begin{thebibliography}{10}
\providecommand{\url}[1]{{#1}}
\providecommand{\urlprefix}{URL }
\expandafter\ifx\csname urlstyle\endcsname\relax
  \providecommand{\doi}[1]{DOI~\discretionary{}{}{}#1}\else
  \providecommand{\doi}{DOI~\discretionary{}{}{}\begingroup
  \urlstyle{rm}\Url}\fi

\bibitem{allaireHomogenizationTwoscaleConvergence1992}
Allaire, G.: Homogenization and two-scale convergence.
\newblock SIAM J. Math. Anal. \textbf{23}(6), 1482--1518 (1992).
\newblock \doi{10.1137/0523084}

\bibitem{armstrongQuantitativeStochasticHomogenization2019}
Armstrong, S., Kuusi, T., Mourrat, J.C.: Quantitative Stochastic Homogenization
  and Large-Scale Regularity, \emph{Grundlehren Der {{Mathematischen
  Wissenschaften}} [{{Fundamental Principles}} of {{Mathematical Sciences}}]},
  vol. 352.
\newblock {Springer, Cham} (2019).
\newblock \doi{10.1007/978-3-030-15545-2}

\bibitem{avellanedaCompactnessMethodsTheory1987}
Avellaneda, M., Lin, F.H.: Compactness methods in the theory of homogenization.
\newblock Comm. Pure Appl. Math. \textbf{40}(6), 803--847 (1987).
\newblock \doi{10.1002/cpa.3160400607}

\bibitem{avellanedaCompactnessMethodsTheory1989}
Avellaneda, M., Lin, F.H.: Compactness methods in the theory of homogenization.
  {{II}}. {{Equations}} in nondivergence form.
\newblock Comm. Pure Appl. Math. \textbf{42}(2), 139--172 (1989).
\newblock \doi{10.1002/cpa.3160420203}

\bibitem{berlyandFictitiousFluidApproach2009a}
Berlyand, L., Gorb, Y., Novikov, A.: Fictitious fluid approach and anomalous
  blow-up of the dissipation rate in a two-dimensional model of concentrated
  suspensions.
\newblock Arch. Ration. Mech. Anal. \textbf{193}(3), 585--622 (2009).
\newblock \doi{10.1007/s00205-008-0152-2}

\bibitem{berlyandHomogenizedNonNewtonianViscoelastic2004}
Berlyand, L., Khruslov, E.: Homogenized non-{{Newtonian}} viscoelastic rheology
  of a suspension of interacting particles in a viscous {{Newtonian}} fluid.
\newblock SIAM J. Appl. Math. \textbf{64}(3), 1002--1034 (2004).
\newblock \doi{10.1137/S0036139902403913}

\bibitem{brezisFunctionalAnalysisSobolev2011}
Brezis, H.: Functional Analysis, {{Sobolev}} Spaces and Partial Differential
  Equations.
\newblock Universitext. {Springer, New York} (2011)

\bibitem{caffarelliW1PestimatesElliptic1998}
Caffarelli, L.A., Peral, I.: On {{W}}{\textsuperscript{1,p}}-estimates for
  elliptic equations in divergence form.
\newblock Comm. Pure Appl. Math. \textbf{51}(1), 1--21 (1998).
\newblock \doi{10.1002/(SICI)1097-0312(199801)51:1<1::AID-CPA1>3.3.CO;2-N}

\bibitem{cioranescuIntroductionHomogenization1999}
Cioranescu, D., Donato, P.: An Introduction to Homogenization, \emph{Oxford
  {{Lecture Series}} in {{Mathematics}} and Its {{Applications}}}, vol.~17.
\newblock {The Clarendon Press, Oxford University Press, New York} (1999)

\bibitem{dangHomogenizationNondiluteSuspension2021}
Dang, T., Gorb, Y., Jim{\'e}nez~Bola{\~n}os, S.: Homogenization of {{Nondilute
  Suspension}} of {{Viscous Fluid}} with {{Magnetic Particles}}.
\newblock SIAM J. Appl. Math. \textbf{81}(6), 2547--2568 (2021).
\newblock \doi{10.1137/21M1413833}

\bibitem{davidsonIntroductionMagnetohydrodynamics2001}
Davidson, P.A.: An {{Introduction}} to {{Magnetohydrodynamics}}.
\newblock {Cambridge University Press} (2001)

\bibitem{desvillettesMeanfieldLimitSolid2008}
Desvillettes, L., Golse, F., Ricci, V.: The mean-field limit for solid
  particles in a {{Navier-Stokes}} flow.
\newblock J. Stat. Phys. \textbf{131}(5), 941--967 (2008).
\newblock \doi{10.1007/s10955-008-9521-3}

\bibitem{duerinckxEinsteinEffectiveViscosity2020}
Duerinckx, M., Gloria, A.: On {{Einstein}}'s effective viscosity formula.
\newblock arXiv:2008.03837 [math-ph]  (2020)

\bibitem{duerinckxCorrectorEquationsFluid2021}
Duerinckx, M., Gloria, A.: Corrector equations in fluid mechanics: Effective
  viscosity of colloidal suspensions.
\newblock Arch. Ration. Mech. Anal. \textbf{239}(2), 1025--1060 (2021).
\newblock \doi{10.1007/s00205-020-01589-1}

\bibitem{duerinckxQuantitativeHomogenizationTheory2021}
Duerinckx, M., Gloria, A.: Quantitative homogenization theory for random
  suspensions in steady {{Stokes}} flow.
\newblock arXiv:2103.06414 [math]  (2021)

\bibitem{duerinckxSedimentationRandomSuspensions2021}
Duerinckx, M., Gloria, A.: Sedimentation of random suspensions and the effect
  of hyperuniformity.
\newblock arXiv:2004.03240 [math-ph]  (2021)

\bibitem{francfortEnhancementElastodielectricsHomogenization2021}
Francfort, G.A., Gloria, A., {Lopez-Pamies}, O.: Enhancement of
  elasto-dielectrics by homogenization of active charges.
\newblock J. Math. Pures Appl. (9) \textbf{156}, 392--419 (2021).
\newblock \doi{10.1016/j.matpur.2021.10.002}

\bibitem{gerard-varetAnalysisViscosityDilute2020}
{G{\'e}rard-Varet}, D., Hillairet, M.: Analysis of the viscosity of dilute
  suspensions beyond {{Einstein}}'s formula.
\newblock Arch. Ration. Mech. Anal. \textbf{238}(3), 1349--1411 (2020).
\newblock \doi{10.1007/s00205-020-01567-7}

\bibitem{giaquintaIntroductionRegularityTheory2012}
Giaquinta, M., Martinazzi, L.: An Introduction to the Regularity Theory for
  Elliptic Systems, Harmonic Maps and Minimal Graphs, \emph{Appunti. {{Scuola
  Normale Superiore}} Di {{Pisa}} ({{Nuova Serie}}) [{{Lecture Notes}}.
  {{Scuola Normale Superiore}} Di {{Pisa}} ({{New Series}})]}, vol.~11, second
  edn.
\newblock {Edizioni della Normale, Pisa} (2012).
\newblock \doi{10.1007/978-88-7642-443-4}

\bibitem{gilbargEllipticPartialDifferential2001}
Gilbarg, D., Trudinger, N.S.: Elliptic Partial Differential Equations of Second
  Order.
\newblock Classics in {{Mathematics}}. {Springer-Verlag, Berlin} (2001)

\bibitem{gorbHomogenizationRigidSuspensions2014}
Gorb, Y., Maris, F., Vernescu, B.: Homogenization for rigid suspensions with
  random velocity-dependent interfacial forces.
\newblock J. Math. Anal. Appl. \textbf{420}(1), 632--668 (2014).
\newblock \doi{10.1016/j.jmaa.2014.05.015}

\bibitem{gruterGreenFunctionUniformly1982}
Gr{\"u}ter, M., Widman, K.O.: The {{Green}} function for uniformly elliptic
  equations.
\newblock Manuscripta Math. \textbf{37}(3), 303--342 (1982).
\newblock \doi{10.1007/BF01166225}

\bibitem{hainesProofEinsteinEffective2012}
Haines, B.M., Mazzucato, A.L.: A proof of {{Einstein}}'s effective viscosity
  for a dilute suspension of spheres.
\newblock SIAM J. Math. Anal. \textbf{44}(3), 2120--2145 (2012).
\newblock \doi{10.1137/100810319}

\bibitem{hanEllipticPartialDifferential2011}
Han, Q., Lin, F.: Elliptic Partial Differential Equations, \emph{Courant
  {{Lecture Notes}} in {{Mathematics}}}, vol.~1, second edn.
\newblock {Courant Institute of Mathematical Sciences, New York; American
  Mathematical Society, Providence, RI} (2011)

\bibitem{hoferSedimentationInertialessParticles2018}
H{\"o}fer, R.M.: Sedimentation of inertialess particles in {{Stokes}} flows.
\newblock Comm. Math. Phys. \textbf{360}(1), 55--101 (2018).
\newblock \doi{10.1007/s00220-018-3131-y}

\bibitem{kenigHomogenizationEllipticSystems2013}
Kenig, C.E., Lin, F., Shen, Z.: Homogenization of elliptic systems with
  {{Neumann}} boundary conditions.
\newblock J. Amer. Math. Soc. \textbf{26}(4), 901--937 (2013).
\newblock \doi{10.1090/S0894-0347-2013-00769-9}

\bibitem{kenigEllipticEquationLuk1985}
Kenig, C.E., Ni, W.M.: On the elliptic equation {{ Lu-k+K exp[2u]=0}}.
\newblock Ann. Scuola Norm. Sup. Pisa Cl. Sci. (4) \textbf{12}(2), 191--224
  (1985)

\bibitem{liEstimatesEllipticSystems2003}
Li, Y.Y., Nirenberg, L.: Estimates for elliptic systems from composite
  material.
\newblock In: Communications on {{Pure}} and {{Applied Mathematics}}, vol.~56,
  pp. 892--925. {Wiley} (2003).
\newblock \doi{10.1002/cpa.10079}

\bibitem{liGradientEstimatesSolutions2000}
Li, Y.Y., Vogelius, M.: Gradient estimates for solutions to divergence form
  elliptic equations with discontinuous coefficients.
\newblock Arch. Ration. Mech. Anal. \textbf{153}(2), 91--151 (2000).
\newblock \doi{10.1007/s002050000082}

\bibitem{littmanRegularPointsElliptic1963}
Littman, W., Stampacchia, G., Weinberger, H.F.: Regular points for elliptic
  equations with discontinuous coefficients.
\newblock Ann. Scuola Norm. Sup. Pisa Cl. Sci. (3) \textbf{17}, 43--77 (1963)

\bibitem{mecherbetSedimentationParticlesStokes2019}
Mecherbet, A.: Sedimentation of particles in {{Stokes}} flow.
\newblock Kinet. Relat. Models \textbf{12}(5), 995--1044 (2019).
\newblock \doi{10.3934/krm.2019038}

\bibitem{nguetsengGeneralConvergenceResult1989}
Nguetseng, G.: A general convergence result for a functional related to the
  theory of homogenization.
\newblock SIAM J. Math. Anal. \textbf{20}(3), 608--623 (1989).
\newblock \doi{10.1137/0520043}

\bibitem{niethammerLocalVersionEinstein2020}
Niethammer, B., Schubert, R.: A local version of {{Einstein}}'s formula for the
  effective viscosity of suspensions.
\newblock SIAM J. Math. Anal. \textbf{52}(3), 2561--2591 (2020).
\newblock \doi{10.1137/19M1251229}

\bibitem{prangeUniformEstimatesHomogenization2014}
Prange, C.: {Uniform Estimates in Homogenization: Compactness Methods and
  Applications}.
\newblock Journ\'ees \'equations aux d\'eriv\'ees partielles pp. 1--25 (2014).
\newblock \doi{10.5802/jedp.110}

\bibitem{shenW1EstimatesElliptic2008}
Shen, Z.: W{\textsuperscript{1,p}} estimates for elliptic homogenization
  problems in nonsmooth domains.
\newblock Indiana Univ. Math. J. \textbf{57}(5), 2283--2298 (2008).
\newblock \doi{10.1512/iumj.2008.57.3344}

\bibitem{shenBoundaryEstimatesElliptic2017}
Shen, Z.: Boundary estimates in elliptic homogenization.
\newblock Anal. PDE \textbf{10}(3), 653--694 (2017).
\newblock \doi{10.2140/apde.2017.10.653}

\bibitem{shenPeriodicHomogenizationElliptic2018}
Shen, Z.: Periodic Homogenization of Elliptic Systems, \emph{Operator
  {{Theory}}: {{Advances}} and {{Applications}}}, vol. 269.
\newblock {Birkh\"auser/Springer, Cham} (2018).
\newblock \doi{10.1007/978-3-319-91214-1}

\bibitem{zhangInteriorHolderGradient2013a}
Zhang, Q., Cui, J.: Interior {{H\"older}} and gradient estimates for the
  homogenization of the linear elliptic equations.
\newblock Sci. China Math. \textbf{56}(8), 1575--1584 (2013).
\newblock \doi{10.1007/s11425-013-4645-6}

\end{thebibliography}

\end{document}